\pgfplotsset{compat=newest}
\newtheorem{thm}{Theorem}[section]
\theoremstyle{definition}
\newtheorem{defn}[thm]{Definition}
\theoremstyle{remark}
\newtheorem{rem}[thm]{Remark}
\numberwithin{equation}{section}
\newtheorem{ex}[thm]{Example}
\newtheorem{proper}{Property}
\def\R{\mathbb{R}}
\def\B{\mathbb{B}}
\def\ST{\mathcal{ST}}
\newcommand{\dsum}{\displaystyle\sum}
\newcommand{\dmin}{\displaystyle\min}
\newcommand{\dmax}{\displaystyle\max}
\title{Minimum Spanning Trees with neighborhoods}%
\author{V\'ictor Blanco}
\address{Dpt. Quant. Methods for Economics \& Business, Universidad de Granada}
\email{vblanco@ugr.es}
\author{Elena Fern\'andez}
\address{Dpt. Statistics \& OR, Universitat Polit\`ecnica de Catalunya}
\email{e.fernandez@upc.edu}
\author[us]{Justo Puerto}
\address{Dpt. Statistics \& OR, Universidad de Sevilla}
\email{puerto@us.es}
\begin{document}

\keywords{Minimum Spanning Trees, Neighborhoods, Mixed Integer Non Linear Programming, Second Order Cone Programming.}
\subjclass[2010]{90C27, 05C05, 90C11, 90C30.}

\maketitle

\begin{abstract}
This paper studies Minimum Spanning Trees under incomplete information for its vertices. We assume that no information is available on the precise placement of vertices so that it is only known that vertices belong to some neighborhoods that are second order cone representable and distances are measured with a $\ell_q$-norm. Two mixed integer non linear mathematical programming formulations are presented, based on alternative representations of subtour elimination constraints. A solution scheme is also proposed, resulting from a reformulation suitable for a Benders-like decomposition, which is embedded within an exact branch-and-cut framework. Furthermore, a mathheuristic is developed, which alternates in solving convex subproblems in different solution spaces, and is able to solve larger instances. The results of  extensive computational experiments are reported and analyzed.
\end{abstract}

\section{Introduction}\label{sec:Intro}
Nowadays Combinatorial Optimization (CO) lies in the heart of multiple applications in the field of Operations Research. Many such applications can be formulated as optimization problems defined on graphs where  some particular structure is sought satisfying some optimality property. Traditionally this type of problems assumed implicitly the exact knowledge of all input elements, and, in particular, of the precise position of vertices and edges. Nevertheless, this assumption does not always hold, as uncertainty, lack of information, or some other factors may affect the relative position of the elements of the input graph. Hence,  new tools are required to give adequate answers to these challenges, which have been often ignored by standard CO tools.

A matter that, in this context, has attracted the interest of researchers over the last years is the solution of certain CO problems when the exact position of the vertices of the underlying graph is not known with certainty. If probabilistic information is available, then stochastic programming tools can be used, and optimization over expected values carried out.  Moreover, even under the assumption of incomplete information one could use a uniform distribution and still apply such an approach. However, the use of probabilistic information and allowing to consider all possible locations for the vertices is not always suitable.
For instance, when for each point of the input graph, a unique representative associated with it must be determined.
Scanning the related literature one can find papers applying both methodologies. Examples of stochastic approaches are for instance \cite{BertsimasHowell93} or  \cite{frank}. Examples of the second type of approach arise in variants of the traveling salesman problem (TSP), minimum spanning tree (MST), or facility location problems that deal with \textit{demand regions} instead of \textit{demand points} (see \cite{AH94,Brimberg,Cooper_JRS78,Dror,Juel_OR81,NPR03,Yang07}, to mention just a few).

A relevant common question raised by the latter class of problems is how to model and solve optimization problems on graphs when vertices are not points but regions in a given domain. The above mentioned case of the TSP, first introduced by Arkin and Hassin \cite{AH94}, has been addressed recently by a number of authors. It generalizes the Euclidean TSP and the group Steiner tree problem, and has applications in VLSI-design and other routing problems,  in which only imprecise information of the positions of the vertices is available. Several inapproximability results and approximation algorithms have been developed for particular cases. The case of the spanning tree problem with neighborhoods (MSTN) was first addressed by Yang et al. \cite{Yang07}, who proved that the general case of the problem in the plane is NP-hard (result also reproved by L\"offler and van Kreveld in  \cite{loffler-kreveld}), and gave several approximation algorithms and a PTAS for the particular case of disjoint unit disks in the plane. Some extensions considering the maximization of the weights are studied in Dorrigiv et. al \cite{Dorrigiv}.  In particular, they proved the non existence of FPTAS for MSTN, for general disjoint disks, in the planar Euclidean case. Disser et. al \cite{Disser}  consider the shortest path problem and the rectilinear MSTN, and give some approximability results. To the best of our knowledge, Gentilini et al \cite{Gentilini} are the first authors to propose an exact Mixed Integer Non Linear Programming (MINLP) formulation for  the TSP with neighborhoods, but we are not aware of any MINLP for the MSTN.

Our goal in this paper is to develop MINLP formulations and solution methods for the MSTN.
We first present  two MINLP formulations that allow to solve medium size MSTN planar and 3$D$ Euclidean instances with up to 20 vertices, for neighborhoods of varying radii using an on-the-shelf solver. Furthermore, we develop an effective branch-and-cut strategy, based on a generalized Benders decomposition  \cite{Benders,geoffrion}, and compare its performance with that of the solver for the proposed formulations. For this we present an alternative formulation for the MSTN, in which the master problem consists in finding an MST with costs derived from a continuous non linear (slave) subproblem, and we develop the expression and separation of the cuts that are added in the solution algorithm.
Given that both the solver (for the two MINLP formulations) and the exact branch-and-cut algorithm can be too demanding, in terms of their computing times, we have also developed an effective and efficient mathheuristic. The mathheuristic stems from the observation that the subproblems defined in the solution spaces of each of the two main sets of variables are convex (so they can be solved very efficiently); it alternates in solving subproblems in each of these solution spaces.\\

The paper is organized as follows. Section \ref{sec:Definition} is devoted to introduce the MSTN and to state a generic formulation. In Section \ref{sec:Formulations} we present and compare two MINLP formulations for the MSTN, based on alternative representations of the spanning trees polytope. Section \ref{sec:Benders} develops the exact branch-and-cut algorithm, based on  a Benders-like decomposition scheme: we define the master and the non linear subproblem, and derive the cuts and their separation. In Section \ref{sec:Experiments} we first compare the performance of the on-the-shelf solver with the two MINLP formulations, and then we report the numerical results obtained with the exact  row-generation algorithm. The  mathheuristic is presented in Section \ref{sec:Mathheuristic}, where we also give the numerical results that it produces. The paper ends with some concluding remarks and our list of references.

 \section{Minimum Spanning Trees with Neighborhoods\label{sec:Definition}}

Let $G=(V,E)$ be a connected undirected graph, whose vertices are embedded in $\R^d$, i.e., $v \in \R^d$ for all $v \in V$. Associated with each vertex $v\in V$, let $\mathcal{N}_v\subseteq \R^d$ denote a convex set containing $v$ in its interior. Let also $\|\cdot\|$  denote a given norm.

Feasible solutions to the Minimum Spanning Tree with Neighborhoods (MSTN) problem consist of  a set of points, $Y^*=\{y_v \in \mathcal{N}_v\mid v \in V\}$, together with a spanning tree $T^*$ on the graph $G^*=(Y^*, E^*)$, with edge set $E^*=\{\{y_v,y_w\}: \{v,w\} \in E\}$. Edges lengths are given by  the norm-based distance between the selected points relative to $\|\cdot\|$, i.e.:
$$
d(y_v, y_w) = \|y_v-y_w\|, \quad \mbox{ for all } \{y_v, y_w\} \in E^*.
$$
 The overall cost of $(Y^*,T^*)$ is therefore
$$
d(T^*) = \dsum_{e=\{y_v,y_w\} \in T^*} d(y_v,y_w).
$$
The MSTN is to find a feasible solution, $(Y^*,T^*)$, of minimum total cost.\\

Particular cases of the MSTN have been studied in the literature for planar graphs. Disser et. al \cite{Disser} studied the case  when the sets $\mathcal{N}_v$ are
rectilinear neighborhoods centered at $v\in V$. Dorrigiv et. al \cite{Dorrigiv} addressed the problem when the sets $\mathcal{N}_v$ are disjoint Euclidean disks. Both referenced works study the complexity of the considered problems but do not attempt to develop MINLP formulations or solution methods for it.

In this paper, we consider the general case where the  graph $G$ is embedded in $\R^d$. Even if our developments can be extended to generic convex sets, we focus on the case where $\mathcal{N}_v$ is second order cone (SOC) representable \cite{vandenberghe}. The main reason for this is that state-of-the-art solvers incorporate mixed integer non-linear implementations of SOC constraints. Such a modeling assumption could be readily overcome if on-the-shelf solvers incorporated more general tools to deal with convex sets.

Observe that SOC representable neighborhoods allow to model, as a particular case, centered balls of a given radius $r_v$, associated with the standard $\ell_p$-norm with $p \in [1,\infty]$ in $\R^d$, that we denote by $\|\cdot\|_p$, i.e., neighborhoods in the form $\mathcal{N}_v = \{x \in \R^d: \|x-v\|_p \leq r_v\}$, where

$$
\|z\|_p = \left\{\begin{array}{cl}
\left(\dsum_{k=1}^d |z_k|^p\right)^{\frac{1}{p}} & \mbox{if $p<\infty$}\\
\dmax_{k\in \{1, \ldots, d\}} |z_k| & \mbox{if $p=\infty$}\end{array}\right..
$$

The reader is referred to \cite{BPE14} for further details on the SOC constraints that allow to represent (as intersections of second order cone and/or rotated second order cone constraints) such norm-based neighborhoods. Indeed, we can also easily handle neighborhoods defined as bounded polyhedra in $\R^d$, as well as intersections of polyhedra and balls. Hence, more sophisticated convex neighborhoods can be suitably represented or approximated using elements from the above mentioned families of sets.

Two extreme situations that can be modeled within our framework are the following. If the neighborhood for each vertex $v\in V$ is  the singleton $\mathcal{N}_v=\{v\}$, then MSTN becomes the classical MST problem with edge lengths given by the norm-based distances between each pair of vertices. On the other hand, if the considered neighborhoods are big enough so that $\bigcap_{v\in V} \mathcal{N}_v \neq \emptyset$, then the problem reduces to finding a degenerate one-vertex tree and the solution to the MSTN is that vertex with cost $0$.

Throughout this paper we use the following notation:
\begin{itemize}
\item $\mathcal{ST}_G$ as the set of incidence vectors associated with spanning trees on $G$, i.e. $\mathcal{ST}_G=\{x \in \R^{|E|}_+: x \mbox{ is a spanning tree on } G\}$
\medskip
\item $\mathcal{Y} = \displaystyle\prod_{v\in V} \mathcal{N}_v$, where $\mathcal{N}_v$ is the neighborhood associated to vertex $v$, which contains the possible sets of vertices for the spanning trees of MSTN. 
\end{itemize}

Then, the MSTN can be stated as:

\begin{align}
\min & \dsum_{e \in E}d(y_v, y_w) x_e\tag{${\rm MSTN}$}\label{mstn}\\
\mbox{s.t. } & x \in \mathcal{ST}_G, \; y \in \mathcal{Y}.\nonumber
\end{align}

Several observations follow from the formulation above:
\begin{enumerate}\label{comment}
\item Fixing $x\in \mathcal{ST}_G$ in  MSTN results in a continuous SOC problem, which is well-known to be convex \cite{vandenberghe}. On the other hand, fixing $y \in \mathcal{Y}$ results in a standard MST problem. It is a well-known that MST admits continuous linear programming representations \cite{Edmonds,martin}. Thus, MSTN can be seen as a biconvex optimization problem, which is neither convex nor concave \cite{biconvex}.
 \item Due to the expression of its objective function,  \ref{mstn} is not separable, even if each of its sets of variables  $x$ and $y$ belong to convex domains in different spaces.
\item Since \ref{mstn} {\it combines} the above two subproblems, it is suitable to be represented as a MINLP.\\
\end{enumerate}

The following example illustrates the  MSTN.
\begin{ex}
\label{ex:1}
Let us consider a graph with eight vertices and 14 edges, $G=(V,E)$ embedded in $\R^2$. The graph $G$ and an Euclidean Minimum Spanning Tree for this graph are shown in Figure \ref{fig1:ex1}. 

\begin{figure}[h]
\centering
    \begin{subfigure}[b]{0.45\textwidth}
\begin{center}
\begin{tikzpicture}[scale=0.5]


\coordinate(X1) at (0,5);
\coordinate(X2) at (1,1);
\coordinate(X3) at (1,6);
\coordinate(X4) at (1,4);
\coordinate(X5) at (3.5,3);
\coordinate(X6) at (9,3);
\coordinate(X7) at (7.5,0);
\coordinate(X8) at (8,6);

\fill (X1) circle (4pt);
\fill (X2) circle (4pt);
\fill (X3) circle (4pt);
\fill (X4) circle (4pt);
\fill (X5) circle (4pt);
\fill (X6) circle (4pt);
\fill (X7) circle (4pt);
\fill (X8) circle (4pt);

\draw (X1)--(X3);
\draw (X1)--(X4);
\draw (X2)--(X4);
\draw (X4)--(X5);
\draw (X5)--(X7);
\draw (X6)--(X7);
\draw (X6)--(X8);

\draw[gray, dotted] (X1)--(X2);
\draw[gray, dotted] (X2)--(X3);
\draw[gray, dotted] (X2)--(X4);
\draw[gray, dotted] (X3)--(X5);
\draw[gray, dotted] (X2)--(X7);
\draw[gray, dotted] (X3)--(X8);
\draw[gray, dotted] (X5)--(X6);
\draw[gray, dotted] (X5)--(X7);
\draw[gray, dotted] (X5)--(X8);
\draw[gray, dotted] (X2)--(X5);

\end{tikzpicture}
\end{center}
\caption{Input graph $G$ and Euclidean MST (black lines).\label{fig1:ex1}}
\end{subfigure}
~
 \begin{subfigure}[b]{0.45\textwidth}
    \begin{center}
\begin{tikzpicture}[scale=0.5]


\coordinate(X1) at (0,5);
\coordinate(X2) at (1,1);
\coordinate(X3) at (1,6);
\coordinate(X4) at (1,4);
\coordinate(X5) at (3.5,3);
\coordinate(X6) at (9,3);
\coordinate(X7) at (7.5,0);
\coordinate(X8) at (8,6);

\draw (X1) circle (1.0);
\draw (X2) circle (0.6);
\draw (X3) circle (1);
\draw (X4) circle (0.6);
\draw (X5) circle (1.4);
\draw (X6) circle (2.4);
\draw (X7) circle (0.8);
\draw (X8) circle (1);

\fill (X1) circle (2pt);
\fill (X2) circle (2pt);
\fill (X3) circle (2pt);
\fill (X4) circle (2pt);
\fill (X5) circle (2pt);
\fill (X6) circle (2pt);
\fill (X7) circle (2pt);
\fill (X8) circle (2pt);

\draw[gray, dotted] (X1)--(X3);
\draw[gray, dotted] (X1)--(X4);
\draw[gray, dotted] (X2)--(X4);
\draw[gray, dotted] (X4)--(X5);
\draw[gray, dotted] (X5)--(X7);
\draw[gray, dotted] (X6)--(X7);
\draw[gray, dotted] (X6)--(X8);

\draw[gray, dotted] (X1)--(X2);
\draw[gray, dotted] (X2)--(X3);
\draw[gray, dotted] (X2)--(X4);
\draw[gray, dotted] (X3)--(X5);
\draw[gray, dotted] (X2)--(X7);
\draw[gray, dotted] (X3)--(X8);
\draw[gray, dotted] (X5)--(X6);
\draw[gray, dotted] (X5)--(X7);
\draw[gray, dotted] (X5)--(X8);
\draw[gray, dotted] (X2)--(X5);
\end{tikzpicture}
\end{center}
\caption{Neighborhoods of the vertices.\newline{\color{white}abcdefg}\label{fig2:ex1}}
\end{subfigure}
\caption{Data for Example \ref{ex:1}.\label{figura:ex1}}
\end{figure}
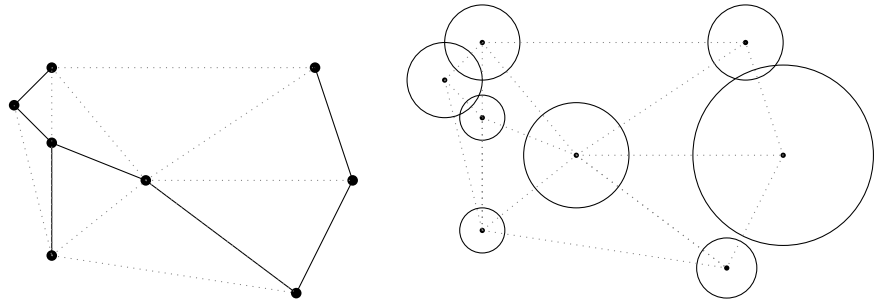

Figure \ref{fig2:ex1} 
shows the input graph together with the neighborhoods $N_v$ associated with the vertices $v \in V$. The neighborhoods are (Euclidean) balls centered at the original vertices, each of them with a different radius. 
 Figure \ref{fig3:ex1} shows an optimal MSTN solution: the location of the vertex selected  in each neighborhood, as well as the final spanning tree (both in gray).

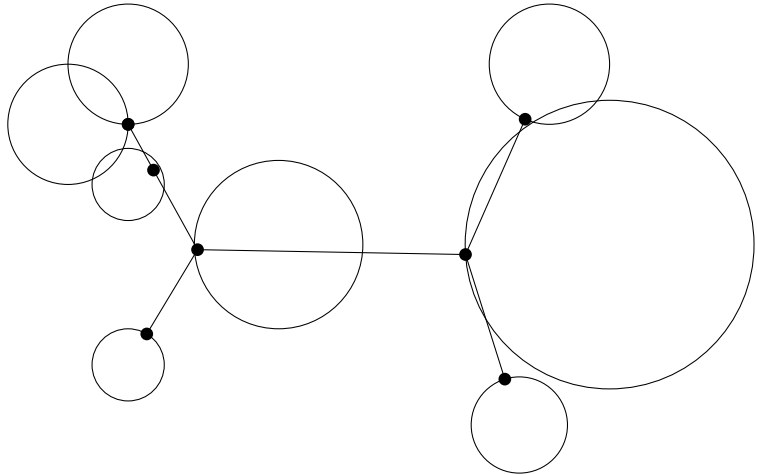
\begin{figure}[h]
\begin{center}
\begin{tikzpicture}[scale=0.8]


\coordinate(X1) at (0,5);
\coordinate(X2) at (1,1);
\coordinate(X3) at (1,6);
\coordinate(X4) at (1,4);
\coordinate(X5) at (3.5,3);
\coordinate(X6) at (9,3);
\coordinate(X7) at (7.5,0);
\coordinate(X8) at (8,6);

\draw (X1) circle (1.0);
\draw (X2) circle (0.6);
\draw (X3) circle (1);
\draw (X4) circle (0.6);
\draw (X5) circle (1.4);
\draw (X6) circle (2.4);
\draw (X7) circle (0.8);
\draw (X8) circle (1);


\coordinate(Y1) at (0.999997,4.999996);
\coordinate(Y2) at (1.309352,1.514104);
\coordinate(Y3) at (0.999997,5.000001);
\coordinate(Y4) at (1.421155,4.238826);
\coordinate(Y5) at (2.153065,2.915752);
\coordinate(Y6) at (6.605687,2.834897);
\coordinate(Y7) at (7.259147,0.762884);
\coordinate(Y8) at (7.596728,5.084919);


\fill (Y1) circle (3pt);
\fill (Y2) circle (3pt);
\fill (Y3) circle (3pt);
\fill (Y4) circle (3pt);
\fill (Y5) circle (3pt);
\fill (Y6) circle (3pt);
\fill (Y7) circle (3pt);
\fill (Y8) circle (3pt);
\draw (Y1)--(Y3);
\draw (Y1)--(Y4);
\draw (Y2)--(Y5);
\draw (Y4)--(Y5);
\draw (Y5)--(Y6);
\draw (Y6)--(Y7);
\draw (Y6)--(Y8);
\end{tikzpicture}
\end{center}
\caption{A MSTN for the data in Example \ref{ex:1}.\label{fig3:ex1}}
\end{figure}

Observe that the optimal spanning tree to the classical MST problem in the original input graph shown in Figure \ref{fig1:ex1}, with edges lengths given by the Euclidean distances between the initial vertices, is no longer valid for the MSTN. The reason is that the actual distances have been updated in order to consider the coordinates of the selected vertices, which are unknown beforehand. Note also that the structure of the original graph is somehow broken, since in the final solution some of the ``initial'' vertices are \textit{merged} into a single one (note that the MST in Figure \ref{fig3:ex1} has seven vertices while the original graph had eight). This is possible only when some of the neighborhoods have a non-empty intersection.\\

In Figure \ref{fig4:ex1} we show an optimal solution to the MSTN in the same input graph, for a different definition of the neighborhoods. Now they are defined as boxes in the form $\mathcal{N}_v = \{z\in\R^2: |z_k-v_k|\leq r_v, k=1, 2\}$.\\

\begin{figure}[h]
\begin{center}
\begin{tikzpicture}[scale=0.8]


\coordinate(X1) at (0,5);
\coordinate(X2) at (1,1);
\coordinate(X3) at (1,6);
\coordinate(X4) at (1,4);
\coordinate(X5) at (3.5,3);
\coordinate(X6) at (9,3);
\coordinate(X7) at (7.5,0);
\coordinate(X8) at (8,6);


\draw (-1,4)--(1,4)--(1,6)--(-1,6)--(-1,4);
\draw (0.4,0.4)--(1.6,0.4)--(1.6,1.6)--(0.4,1.6)--(0.4,0.4);
\draw (0,5)--(2,5)--(2,7)--(0,7)--(0,5);
\draw (0.4,3.4)--(1.6,3.4)--(1.6,4.6)--(0.4,4.6)--(0.4,3.4);
\draw (2.1,1.6)--(4.9,1.6)--(4.9,4.4)--(2.1,4.4)--(2.1,1.6);
\draw (6.6,0.6)--(11.4,0.6)--(11.4,5.4)--(6.6,5.4)--(6.6,0.6);
\draw (6.7,-0.8)--(8.3,-0.8)--(8.3,0.8)--(6.7,0.8)--(6.7,-0.8);
\draw (7,5)--(9,5)--(9,7)--(7,7)--(7,5);

\coordinate(Y1) at (1,5);
\coordinate(Y2) at (1.6,1.6);
\coordinate(Y4) at (1.529717,4.055101);
\coordinate(Y5) at (2.272081,2.729865);
\coordinate(Y6) at (6.6,2.673494);
\coordinate(Y7) at (6.7,0.8);
\coordinate(Y8) at (7,5);



\draw (Y2)--(Y5);
\draw (Y1)--(Y4);
\draw (Y4)--(Y5);
\draw (Y5)--(Y6);
\draw (Y6)--(Y7);
\draw (Y6)--(Y8);

\fill (Y1) circle (3pt);
\fill (Y2) circle (3pt);
\fill (Y4) circle (3pt);
\fill (Y5) circle (3pt);
\fill (Y6) circle (3pt);
\fill (Y7) circle (3pt);
\fill (Y8) circle (3pt);

\end{tikzpicture}
\end{center}
\caption{A MSTN for the data in Example \ref{ex:1} for polyhedral neighborhoods.\label{fig4:ex1}}
\end{figure}
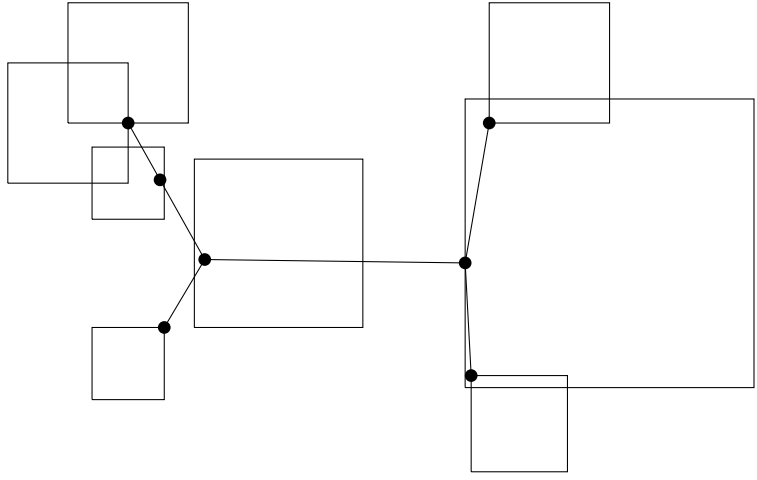

\end{ex}

As we see below, some of the properties of the standard MST extend to MSTN. In particular, the cut and cycle properties that allow reducing the dimensionality of MSTN by discarding edges that will not appear in an optimal solution as well as those edges that will appear in it. Before, we introduce the additional notation associated with each edge $e=\{v, w\}\in E$.\\
\begin{itemize}
\item $\widetilde{U}_e$ and $\widetilde{u}_e$ respectively denote the maximum and minimum distance between any pair of points in the neighborhoods of the end-vertices of $e$. That is, $\widetilde{U}_e = \max\{d(y_v, y_w): y_v\in \B_r^p(v), y_w\in \B_r^p(w)\}$ and $\widetilde{u}_e = \min\{d(y_v, y_w): y_v\in \B_r^p(v), y_w\in \B_r^p(w)\}$.\label{notation}
\end{itemize}

\begin{proper}\label{propiedad} $ $
\begin{itemize}
\item[(a)] Let $C$ be a cycle of $G=(V,E)$ and $e\in C$ such that  $\widetilde{u}_e  > \min_{e' \in E} \{\widetilde{U}_{e'}: e'\in C,e'\neq e\}$. Then, $e$ does not belong to a MSTN.
\item[(b)] Let $S \subset V$ and $(S, V\backslash S)=\left\{e=\{v,w\} \in E \mid v\in S \text{ and } w\in V\backslash S\right\}$ be its associated cutset. Let $e=\{v,w\} \in (S, V\backslash S)$  be such that $\widetilde{U}_e  < \min_{e' \in E} \{\widetilde{u}_{e^\prime}: e^\prime=\{v^\prime,w^\prime\}\in E, \; e'\neq e, v^\prime\in S, w^\prime \in V\backslash S\}$. Then, $e$ belongs to any MSTN.\\
\end{itemize}
\end{proper}
\begin{proof} $ $

\begin{itemize}
\item[(a)] Let $C$ be a cycle of $G=(V,E)$ and $e\in C$ such that $\widetilde{u}_e  > \min_{e \in E} \{\widetilde{U}_{e'}: e'\in C,e'\neq e\}$.\\
 Suppose, there is an MSTN of $G$, $T$ with $e\in T$. Then, for any other edge $e^\prime$ in the cycle $C$, the tree $T^\prime = T \cup \{e^\prime\}\backslash \{e\}$ satisfies that:
$$
d(T^\prime) \leq d(T) + \widetilde{U}_{e^\prime} -  \widetilde{u}_{e}  < d(T).
$$
Thus, the cost of $T^\prime$ is strictly smaller than the cost of $T$, contradicting the optimality of $T$. Hence $e$ will not appear in $T$.
\item[(b)] Let $T$ be a MSTN of $G$ with $e\not\in T$. Since $T$ is a tree, the unique cycle of $T \cup \{e\}$ contains both $e$ and the unique path in $G$ connecting $v$ and $w$, that does not contain $e$. Let $e^\prime$ the edge in such a path crossing the cut, i.e., $e^\prime=\{v^\prime, w^\prime\}$ with $v^\prime\in S$ and $w^\prime$ in $V\backslash S$. Then, $T^\prime = T \cup \{e\} \backslash \{e^\prime\}$ is a tree and such that
$$
d(T^\prime) \leq d(T) + \widetilde{U}_{e} -  \widetilde{u}_{e^\prime} < d(T),
$$
so $T^\prime$ has an overall distance smaller than $T$, contradicting its optimality. Hence $e$ will appear in $T$.
\end{itemize}
\end{proof}

\section{Mixed Integer Non Linear Programming Formulations }\label{sec:Formulations}

In this section we present alternative MINLP formulations for the MSTN that will be compared computationally in later sections. All formulations use the following sets of decision variables:

\begin{itemize}
\item Binary variables $x_e\in \{0, 1\}$, $e\in E$, to represent the edges of the spanning trees.
\item Continuous variables $y_v\in \mathcal{N}_v$, $v\in V$, to represent the point selected in each neighborhood.
\item Continuous variables $u_e\ge 0$, $e=\{v, w\}\in E$, to represent the distance $d(y_v, y_w)$ between the pairs of selected points.
\end{itemize}

Properties \ref{propiedad}(a) and (b) can be exploited in order to reduce the number of $x$ variables in the formulations. In particular, we  only need to define variables $x_e$ associated with edges that do not satisfy the condition \ref{propiedad}(a). On the other hand, we can set at value 1 all variables $x_e$ associated with edges that satisfy \ref{propiedad}(b).\\

Let $\mathcal{U}=\{u\in \R^{|E|}_+: u_e\geq d(y_v, y_w), \text{for all } e=\{v, w\}\in E, \text{ for some } y\in \mathcal{Y}\}$ denote implicitly the domain for the feasibility of the $u$ variables. Then, a generic bilinear formulation for MSTN is

\begin{align}
\min & \dsum_{e \in E}u_e x_e\tag{${\rm P_{xu}}$}\label{matheu0}\\
\text{s.t. } & x \in \mathcal{ST}_G, \; u \in \mathcal{U}.\nonumber
\end{align}

In the following we resort to McCormick's envelopes \cite{mccormick} for the linearization of the bilinear terms of the objective function. For this, we define  an additional set of continuous decision variables $\theta_e \ge 0$, $e\in E$ to represent the products $u_ex_e$. Then the linearization of the generic formulation \ref{matheu0} is:

\begin{align}
\min \qquad &  \Theta =\dsum_{e \in E} \theta_e\tag{${\rm RL-MSTN}$}\label{rl-mstn}\\
\text{s.t. } & \theta_e \geq u_e - \widetilde{U}_e (1-x_e), \quad \forall e \in E,\tag{${\rm LIN-Mc}$}\label{linz}\\
&x \in \mathcal{ST}_G, \quad  u \in \mathcal{U}, \quad  \theta_e \geq 0, e\in E.\nonumber
\end{align}

Furthermore, throughout we will describe the set $\mathcal{U}$ using the set of constraints
\begin{alignat}{2}
             & \|y_v - y_w\|\leq u_e,                    && \qquad \forall e=\{v, w\} \in E, \label{u1}\tag{${\rm U}_1$}\\
             & y \in \mathcal{Y},                      &&\label{u2}\tag{${\rm U}_2$}
\end{alignat}
which set the distance values and impose that the $y$ points belong to the appropriate neighborhoods, respectively. \\

Note that the above formulation (\ref{rl-mstn}) can be reinforced by adding the following valid inequalities: $\theta_e\ge \tilde u_e x_e$, for all $e\in E$.

The two formulations below differ from each other in the representation of subtour elimination constraints (SEC). One of them uses the classical  representation of \cite{Edmonds}, which consists of a family with an exponential number of inequalities. The other one   uses a compact formulation based on the well-known Miller-Tucker-Zemlin (MTZ) constraints \cite{mtz}. Despite having a weaker linear programming bound than the subtour elimination representation for the classical MST problem, we use this formulation since, in practice, it has given quite good results for other problems related to spanning trees \cite{marin,pozo}. Indeed, other compact representations could be used, like for instance, the one by Martin \cite{martin}. In our experience,  \cite{mtz} gives a good tradeoff between the number of variables it requires and the bounds it produces.

\subsection{MSTN formulation based on classical representation of SECs}

\begin{alignat}{2}
\min     \qquad     & \Theta =\dsum_{e \in E} \theta_e                  &&\tag{${\rm SEC-MSTN}$}\label{sec-mstn}\\
\text{s.t. } & \eqref{linz}, \eqref{u1},\eqref{u2},\nonumber\\
             & \dsum_{e \in E} x_e = |V|-1,              &&\label{st1}\tag{${\rm ST}_1$}\\
             & \dsum_{e=\{v,w\}: v, w \in S} x_e \leq |S|-1, && \forall S \subset V, \label{st2}\tag{${\rm ST}_2$}\\
             & u, \theta \in \R^{|E|}_+, y \in \R^{|V|\times d}, x \in && \{0,1\}^{|E|}.\label{dom}\tag{${\rm D_1}$}
\end{alignat}

Constraints \eqref{st1} impose that exactly $|V|-1$ edges are selected and subtours are prevented by \eqref{st2}.
\eqref{dom} define the domain of the variables.

As mentioned, the number of constraints in \eqref{st2} is exponential on $|V|$, so a separation procedure (e.g. max flow - min cut)   to certify whether a solution is feasible or otherwise, to provide a violated constraint, is needed to solve this formulation. This is avoided in the next formulation, which uses the MTZ compact representation of SECs \cite{mtz}.

\subsection{MSTN formulation based on Miller-Tucker-Zemlin}

The formulation based on the MTZ representation of SECs builds a tree rooted at an arbitrarily selected vertex where the arcs of the tree are oriented \textit{towards} the root.
In our case we set vertex 1 as the root of the trees. Associated with each edge $\{v, w\}\in E$ we define two additional binary decision variables, $z_{vw}$ and $z_{wv}$, to indicate whether or not  $(v, w)$ (resp. $(w, v)$) is used as a directed arc. The set of such arcs is denoted by $A$. As it is usual for the representation of the SEC constraints we use continuous variables $s_{v}$, $v\in V$, associated with the vertices. The MTZ-MSTN formulation is:

\begin{alignat}{2}
\min \qquad & \Theta =\dsum_{e \in E} \theta_e &&\tag{${\rm MTZ-MSTN}$}\label{mtz}\\
\text{s.t. } & \eqref{linz}, \eqref{u1},\eqref{u2},\nonumber\\
& x_e =  z_{uv} + z_{vu} ,&& \forall e = \{u,v\} \in E, \label{mtz1}\tag{${\rm MTZ}_1$}\\
& \dsum_{(v,1) \in \delta^-(1)} z_{v1} \geq 1, &&       \label{mtz2}\tag{${\rm MTZ}_2$}\\
& \dsum_{(v,w) \in \delta^-(u)} z_{vw} = 1,  && \forall v \in V\backslash\{1\}, \label{mtz3}\tag{${\rm MTZ}_3$}\\
& |V|z_{vw} + s_v - s_w \leq |V|-1, && \forall (v, w) \in A,\label{mtz4}\tag{${\rm MTZ}_4$}\\
& s_1=1; 2\leq s_u \leq |V|, && \forall u \in V\backslash\{1\}, \label{mtz5}\tag{${\rm MTZ}_5$}\\
& u, \theta \in \R^{|E|}_+, y \in \R^{|V|\times d}, x \in  \{0,1\}^{|E|},&&\tag{${\rm D_1}$}\\
& z \in \{0,1\}^{|E|}, s\in \R^{|V|}_+.&&\label{dom2}\tag{${\rm D_2}$}
\end{alignat}

The meaning of the new constraints is as follows. Constraints \eqref{mtz1} relate the \textit{edge} and  \textit{arc} decision variables. The connectivity with the root is guaranteed by \eqref{mtz2}-\eqref{mtz3}. Subtours are eliminated by \eqref{mtz4} -\eqref{mtz5}, where the later set appropriate bounds for the vertex variables $s$. The domain of the new variables is set by \eqref{dom2}.\\

As mentioned, the two formulations presented above use the \textit{norm} constraints \eqref{u1} and \eqref{u2} to represent both the distance measure for the edges and the neighborhoods. As we see below both sets of constraints
can also be handled by using either SOC or linear constraints. The following remarks show the explicit representation of some general cases of this type of constraints.\\

\begin{rem} [$\ell_q$-norm representation]\label{rem:1}
As shown in \cite[Lemma 3]{BPE14}, if the norm $\|\cdot\|$ is a $\ell_q$-norm with $q\in \mathbb{Q}$ and $q=\dfrac{r}{s}>1$ (with $gcd(r,s)=1$), then the constraints of the form $\|X-Y\|_{q}\leq Z$ as those of \eqref{u1} can be rewritten as the following set of inequalities:

\begin{minipage}{\textwidth}
\begin{minipage}{0.75\textwidth}
$$\hspace*{3cm}
\left.\begin{array}{ll}
Q_{k} + X_k - Y_{k}\ge 0,&\; k=1, \ldots, d, \\
Q_{k} - X_k + Y_{k}\ge 0,& \; k=1, \ldots, d,\\
(Q_{k})^{r} \leq (R_{k})^{s} Z^{r-s},&  k=1, \ldots, d,\\
\dsum_{k=1}^d R_{k} \leq Z,& \\
R_k \geq 0, & k=1, \ldots, d,\end{array}\right\}
$$
where for $k=1, \ldots, d$, $Q_k =|X_k - Y_k|$ and $R_k=|X_k-Y_k|^{q} Z^{-1/\rho}$, with $\rho=\frac{r}{r-s}$.\\
\end{minipage}
\begin{minipage}{0.2\textwidth}
\begin{equation}\label{in:norm}
\end{equation}
\end{minipage}
\end{minipage}
The above gives a representation of \eqref{u1} with a number of SOC inequalities
that is polynomial in the dimension d and q.
\end{rem}

\begin{rem}[Polyhedral norm representation]
When the  norm $\|\cdot\|$ is a polyhedral (or block) norm, a (linear) representation, much simpler   than the one given in Remark \ref{rem:1} is possible. Let $B^*$ be the unit ball of its dual norm and  ${\rm Ext}(B^*)$ the set of extreme points of $B^*$. The constraint $Z \geq \|X-Y\|$ is then equivalent to
$$
Z \geq e^t (X-Y), \; \forall e \in {\rm Ext}(B^*),
$$
where $e^t$ denotes the transpose of $e$.
\end{rem}

\subsection{Computational comparison of the two formulations}

We have performed a series of computational experiments in order to compare the performance of the two formulations \ref{sec-mstn} and \ref{mtz}, as well as to explore the limitations of each of them. For this we have generated several batteries of instances with different settings. We consider complete graphs with a number of vertices ranging in $[5,20]$,  and randomly generated coordinates in $\R^2$ and $\R^3$ ranging in $[0,100]$.   Distances are measured using the Euclidean norm and Euclidean balls are used as neighborhoods of the vertices. In addition, we consider four different scenarios for generating the radii to define the neighborhoods of each vertex in a given instance:
\begin{description}
\item[Small size Neighborhoods ($r=1$):] Radii randomly generated in $[0,5]$.
\item[Small-Medium size Neighborhoods ($r=2$):] Radii randomly generated in $[5,10]$.
\item[Medium-Large size Neighborhoods ($r=3$):] Radii randomly generated in $[10,15]$.
\item[Large size Neighborhoods ($r=4$):] Radii randomly generated in $[15,20]$.
\end{description}
The above four cases allow us to observe the performance of the formulations for neighborhoods of varying sizes and to analyze how these sizes affect the computation the MSTN in each case.
Finally, five different instances were generated for each combination of number of vertices and radii, both in the plane and in the $3D$-space. The generated data are available at \url{bit.ly/mstneigh}.

All the formulations were coded in C, and solved using Gurobi  6.5 \cite{gurobi} in a Mac OSX El Capitan with an Intel Core i7 processor at 3.3 GHz and 16GB of RAM. A time limit of 2 hours was set in all the experiments.

Tables \ref{table:1}-\ref{table:2} summarize the results of these experiments. In these tables the column \textit{CPU}, under the heading of each formulation, reports the average computing time (in seconds) to attain optimality.  Whenever the time limit of 2 hours is reached without certifying optimality, columns under \textit{GAP} report the average percentage deviation of the best solution found during the exploration with respect to the lower bound at termination. Columns under \textit{\#Nodes} report the average number of nodes explored in the branch-and-bound search, whereas column \textit{SEC} gives the average number of  constraints \eqref{st2} incorporated to formulation SEC-MSTN throughout the solution process. Finally, the last column in each block reports the percentage of instances optimally solved with each formulation.

Observe that the computing times required by SEC-MSTN are in most cases smaller than those required by MTZ-MSTN. Furthermore, some instances that could not be solved with MTZ-MSTN, were optimally solved with SEC-MSTN. In most of the cases where SEC-MSTN did not succeed, MTZ-MSTN  was also not able to solve the corresponding instance. Note that, for the instances with $n=20$, we only report the results  for the first scenario ($r=1$), since neither  SEC-MSTN nor MTZ-MSTN were able to solve any of such instances for $r\geq 2$. We would like to highlight that, even if the $3$-dimensional instances have a higher number of variables than the planar ones, the results, in terms of computing times, percentage deviations, and number of optimally solved instances are better for these instances than for the $2$-dimensional ones.  Observe that the difficulty of an instance is highly related to whether or not the neighborhoods have non-empty intersections; in such cases, the continuous relaxation tends to \textit{collapse} the vertices of intersecting neighborhoods into a single one, which is not necessarily an optimal strategy. This justifies the higher difficulty of planar instances since, with uniform randomly generated points and  given radii, the probability of intersection of neighborhoods is higher in case of the plane than in the space \cite{Dufour}.

\renewcommand{\tabcolsep}{0.1cm}

\begin{table}[hbpt]
  \caption{Results of \ref{mtz} and \ref{sec-mstn} for $\R^2$ instances.}
  {\scriptsize  \begin{tabular}{|r|r|rrrr|rrrrr|}\cline{3-11}
 \multicolumn{2}{c|}{} &\multicolumn{4}{c|}{ \ref{mtz}} & \multicolumn{5}{c|}{\ref{sec-mstn}} \\\hline
             $r$  & $n$      & CPU & \#Nodes & GAP & \%Solved & CPU & \#SECs & \#Nodes & GAP & \%Solved \\\hline
\multirow{12}{*}{1}        & 5     & 0.0652 & 5.40  &       & 100\% & 0.0250 & 3.40  & 9.00  &       & 100\% \\
          & 6     & 0.0965 & 7.60  &       & 100\% & 0.0334 & 6.40  & 21.20 &       & 100\% \\
          & 7     & 0.1403 & 84.60 &       & 100\% & 0.0456 & 9.60  & 54.00 &       & 100\% \\
          & 8     & 0.1917 & 201.60 &       & 100\% & 0.0677 & 9.20  & 41.40 &       & 100\% \\
          & 9     & 0.2592 & 37.60 &       & 100\% & 0.0826 & 29.60 & 76.00 &       & 100\% \\
          & 10    & 0.4843 & 434.80 &       & 100\% & 0.1318 & 64.60 & 241.40 &       & 100\% \\
          & 11    & 0.6472 & 568.20 &       & 100\% & 0.3922 & 123.80 & 552.60 &       & 100\% \\
          & 12    & 0.9159 & 712.00 &       & 100\% & 0.3083 & 156.40 & 547.80 &       & 100\% \\
          & 13    & 10.9525 & 3145.80 &       & 100\% & 1.1175 & 419.00 & 1314.80 &       & 100\% \\
          & 14    & 4.7581 & 4014.80 &       & 100\% & 1.1627 & 300.40 & 1043.60 &       & 100\% \\
          & 15    & 657.1666 & 41153.60 &       & 100\% & 444.5906 & 1474.20 & 17828.00 &       & 100\% \\
          & 20    & 2915.1011 & 110070.80 &       & 100\% & 840.0096 & 2431.20 & 32173.80 &       & 100\% \\\hline
 \multirow{11}{*}{2}           & 5     & 0.0820 & 47.00 &       & 100\% & 0.0263 & 7.40  & 54.60 &       & 100\% \\
          & 6     & 0.1226 & 44.10 &       & 100\% & 0.0451 & 11.90 & 84.80 &       & 100\% \\
          & 7     & 0.1571 & 123.20 &       & 100\% & 0.0582 & 18.60 & 95.60 &       & 100\% \\
          & 8     & 0.4895 & 480.80 &       & 100\% & 0.2000 & 98.40 & 457.40 &       & 100\% \\
          & 9     & 0.5531 & 415.80 &       & 100\% & 0.3984 & 128.40 & 666.20 &       & 100\% \\
          & 10    & 1.3820 & 915.40 &       & 100\% & 0.7600 & 174.40 & 1125.00 &       & 100\% \\
          & 11    & 1.6639 & 835.60 &       & 100\% & 1.2961 & 235.80 & 1050.20 &       & 100\% \\
          & 12    & 32.8139 & 12301.20 &       & 100\% & 8.2899 & 832.80 & 9301.60 &       & 100\% \\
          & 13    & 143.7873 & 16259.40 &       & 100\% & 9.7330 & 4685.40 & 68409.20 &       & 100\% \\
          & 14    & 1467.5540 & 44337.00 & 7.64\% & 80\% & 661.3465 & 3252.60 & 36310.60 &       & 100\% \\
          & 15    & 3428.0761 & 423135.80 & 4.97\% & 80\% & 3424.9741 & 15712.80 & 179939.00 & 6.29\% & 60\% \\\hline
\multirow{11}{*}{3}            & 5     & 0.0958 & 44.20 &       & 100\% & 0.0354 & 9.40  & 79.80 &       & 100\% \\
          & 7     & 0.2645 & 414.60 &       & 100\% & 0.2772 & 189.70 & 1133.40 &       & 100\% \\
          & 8     & 1.6716 & 2097.80 &       & 100\% & 1.1393 & 338.60 & 1894.20 &       & 100\% \\
          & 9     & 3.7345 & 3827.40 &       & 100\% & 3.8655 & 407.60 & 3515.40 &       & 100\% \\
          & 10    & 5.9807 & 3465.20 &       & 100\% & 3.8294 & 333.80 & 2426.20 &       & 100\% \\
          & 11    & 713.2283 & 172376.20 &       & 100\% & 976.5382 & 61128.20 & 363205.60 &       & 100\% \\
          & 12    & 1054.4171 & 479364.20 &       & 100\% & 2828.2251 & 97800.80 & 576762.00 &       & 100\% \\
          & 13    & 3323.6210 & 279362.20 & 13.45\% & 60\% & 4626.0085 & 116751.40 & 953914.60 & 20.98\% & 80\% \\
          & 14    & $>$7200 & 1385623.40 & 30.04\% & 0\% & $>$7200 & 27120.40 & 162667.60 & 38.07\% & 0\% \\
          & 15    & $>$7200 & 1473884.40 & 19.43\% & 0\% & $>$7200 & 87730.20 & 392951.00 & 23.65\% & 0\% \\\hline
 \multirow{11}{*}{4}           & 5     & 0.0886 & 33.20 &       & 100\% & 0.0288 & 4.80  & 47.40 &       & 100\% \\
          & 6     & 0.1688 & 307.20 &       & 100\% & 0.1797 & 95.80 & 709.20 &       & 100\% \\
          & 8     & 2.0333 & 1976.60 &       & 100\% & 1.1078 & 289.80 & 1562.40 &       & 100\% \\
          & 9     & 4.4483 & 4936.00 &       & 100\% & 9.3935 & 444.60 & 6657.20 &       & 100\% \\
          & 10    & 67.5709 & 33224.80 &       & 100\% & 194.9068 & 1224.20 & 28680.60 &       & 100\% \\
          & 11    & 469.3033 & 198141.80 &       & 100\% & 315.9130 & 6463.80 & 70995.60 &       & 100\% \\
          & 12    & 2471.0749 & 403914.60 & 6.45\% & 80\% & 822.4408 & 105361.40 & 906147.00 &       & 100\% \\
          & 13    & 4609.7707 & 874785.60 & 16.88\% & 40\% & 5134.5084 & 8477.00 & 163847.00 & 19.64\% & 40\% \\
          & 14    & $>$7200 & 807955.40 & 44.52\% & 0\% & $>$7200 & 37016.40 & 192311.20 & 51.26\% & 0\% \\
          & 15    & $>$7200 & 948641.60 & 34.07\% & 0\% & $>$7200 & 29946.80 & 168779.80 & 43.33\% & 0\% \\\hline
    \end{tabular}}
  \label{table:1}
  \end{table}

\begin{table}[hbpt]
  \caption{Results of  \ref{mtz} and \ref{sec-mstn} for $\R^3$ instances.}
  {\scriptsize  \begin{tabular}{|r|r|rrrr|rrrrr|}\cline{3-11}
 \multicolumn{2}{c|}{} &\multicolumn{4}{c|}{ \ref{mtz}} & \multicolumn{5}{c|}{\ref{sec-mstn}} \\\hline
             $r$  & $n$      & CPU & \#Nodes & GAP & \%Solved & CPU & \#SECs & \#Nodes & GAP & \%Solved \\\hline
  \multirow{12}{*}{1}          & 5     & 0.0677 & 3.60  &       & 100\% & 0.0282 & 2.40  & 17.20 &       & 100\% \\
          & 6     & 0.1049 & 11.80 &       & 100\% & 0.0429 & 3.00  & 14.00 &       & 100\% \\
          & 7     & 0.2137 & 24.40 &       & 100\% & 0.0694 & 5.60  & 24.80 &       & 100\% \\
          & 8     & 0.2439 & 52.40 &       & 100\% & 0.0813 & 6.20  & 38.40 &       & 100\% \\
          & 9     & 0.3733 & 166.80 &       & 100\% & 0.1298 & 13.40 & 127.40 &       & 100\% \\
          & 10    & 0.3803 & 56.20 &       & 100\% & 0.1442 & 34.00 & 127.40 &       & 100\% \\
          & 11    & 1.0249 & 281.40 &       & 100\% & 0.3568 & 27.60 & 336.20 &       & 100\% \\
          & 12    & 0.6932 & 235.20 &       & 100\% & 0.2772 & 62.00 & 225.00 &       & 100\% \\
          & 13    & 1.3241 & 763.40 &       & 100\% & 0.9351 & 113.60 & 819.60 &       & 100\% \\
          & 14    & 4.1596 & 1112.00 &       & 100\% & 2.6353 & 200.80 & 1164.60 &       & 100\% \\
          & 15    & 4.2952 & 1286.20 &       & 100\% & 2.5708 & 197.00 & 812.40 &       & 100\% \\
          & 20    & 67.5323 & 6555.20 &       & 100\% & 8.9617 & 372.20 & 1441.00 &       & 100\% \\\hline
    \multirow{11}{*}{2}        & 5     & 0.0983 & 12.40 &       & 100\% & 0.0431 & 6.80  & 37.40 &       & 100\% \\
          & 6     & 0.1479 & 27.40 &       & 100\% & 0.0497 & 4.70  & 35.30 &       & 100\% \\
          & 7     & 0.2058 & 51.80 &       & 100\% & 0.0770 & 9.20  & 55.80 &       & 100\% \\
          & 8     & 0.3084 & 211.40 &       & 100\% & 0.1645 & 49.80 & 263.00 &       & 100\% \\
          & 9     & 0.8943 & 382.00 &       & 100\% & 0.4596 & 86.20 & 593.80 &       & 100\% \\
          & 10    & 0.5047 & 170.60 &       & 100\% & 0.2185 & 50.60 & 267.80 &       & 100\% \\
          & 11    & 1.4917 & 653.40 &       & 100\% & 0.5416 & 134.00 & 679.60 &       & 100\% \\
          & 12    & 3.2860 & 1814.40 &       & 100\% & 5.4726 & 462.80 & 2440.20 &       & 100\% \\
          & 13    & 5.3095 & 1956.40 &       & 100\% & 5.6612 & 437.20 & 2344.40 &       & 100\% \\
          & 14    & 16.8888 & 4485.20 &       & 100\% & 13.0737 & 1108.60 & 9084.40 &       & 100\% \\
          & 15    & 100.5050 & 14664.20 &       & 100\% & 54.8965 & 1524.20 & 12674.20 &       & 100\% \\\hline
   \multirow{11}{*}{3}         & 5     & 0.1034 & 12.00 &       & 100\% & 0.0450 & 3.00  & 39.60 &       & 100\% \\
          & 7     & 0.2737 & 199.30 &       & 100\% & 0.1663 & 79.70 & 428.00 &       & 100\% \\
          & 8     & 1.0901 & 972.40 &       & 100\% & 1.6812 & 230.40 & 1323.80 &       & 100\% \\
          & 9     & 15.9457 & 3589.40 &       & 100\% & 2.0036 & 295.00 & 3520.80 &       & 100\% \\
          & 10    & 2.0609 & 1124.00 &       & 100\% & 2.2637 & 259.80 & 1459.20 &       & 100\% \\
          & 11    & 29.7077 & 5477.80 &       & 100\% & 34.5579 & 549.20 & 7713.00 &       & 100\% \\
          & 12    & 330.0074 & 19946.80 &       & 100\% & 531.3279 & 1580.20 & 20383.00 &       & 100\% \\
          & 13    & 1069.2640 & 37625.20 &       & 100\% & 668.1420 & 2349.60 & 30331.40 &       & 100\% \\
          & 14    & 3875.3014 & 152561.80 & 15.19\% & 60\% & 2519.3367 & 11488.00 & 112377.40 & 6.87\% & 80\% \\
          & 15    & 1001.7704 & 47758.80 &       & 100\% & 160.5466 & 4114.40 & 37114.80 &       & 100\% \\\hline
     \multirow{11}{*}{4}       & 5     & 0.0875 & 21.60 &       & 100\% & 0.0469 & 6.80  & 42.60 &       & 100\% \\
          & 6     & 0.2094 & 134.20 &       & 100\% & 0.1156 & 28.00 & 255.40 &       & 100\% \\
          & 8     & 0.8188 & 832.20 &       & 100\% & 1.1261 & 204.00 & 1188.60 &       & 100\% \\
          & 9     & 2.8822 & 2408.60 &       & 100\% & 1.7530 & 329.40 & 4937.60 &       & 100\% \\
          & 10    & 6.4525 & 3461.40 &       & 100\% & 7.0799 & 525.80 & 3539.00 &       & 100\% \\
          & 11    & 32.0012 & 9411.20 &       & 100\% & 37.8657 & 1084.40 & 9208.20 &       & 100\% \\
          & 12    & 70.9765 & 12658.60 &       & 100\% & 37.6467 & 1104.00 & 11910.80 &       & 100\% \\
          & 13    & 710.0275 & 100078.40 &       & 100\% & 1679.7648 & 52401.40 & 287336.00 &       & 100\% \\
          & 14    & 4635.9384 & 287990.20 & 27.48\% & 60\% & 6433.5763 & 39467.20 & 192079.80 & 25.48\% & 40\% \\
          & 15    & 5741.0396 & 115401.20 & 7.12\% & 20\% & 3609.2785 & 11392.80 & 75087.00 & 10.55\% & 60\% \\\hline
    \end{tabular}}
  \label{table:2}
  \end{table}
\section{Branch-and-Cut solution algorithm\label{sec:Benders}}

In this section we describe the branch-and-cut  solution algorithm that we propose for solving  MSTN.  The special structure of MSTN, with disjoint domains for each set of variables - $x$ and $u$- and a bilinear objective function makes it possible to apply well-known Benders-like decomposition methods \cite{Benders,geoffrion}. This type of  well-known solution schemes have been widely applied to problems with two sets of structural decision variables, in which the subproblem that results when fixing one of the sets of variables can be \textit{efficiently} solved. Note that, as mentioned before, this requisite is satisfied in the case of  MSTN.

In order to warrant the convergence properties of the approach, we also apply reformulation techniques to the bilinear objective function. For a given spanning tree $\bar{x} \in \mathcal{ST}_G$, the ``optimal'' vertices and  distances of its associated MSTN, can be computed by solving the following convex subproblem:

\begin{align}
u(\bar x) = \min & \dsum_{e \in E}u_e \bar{x}_e\tag{${\rm PU_{\bar{x}}}$}\label{xmatheu0}\\
\text{s.t. } &  u \in \mathcal{U}\nonumber
\end{align}

As already mentioned, \eqref{xmatheu0} is a continuous SOC problem, which can be efficiently solved with on-the-shelf solvers. Note also that the number of $u$ variables in \eqref{xmatheu0} reduces to $n-1$, because only distances associated with the edges $e \in E$ with $\bar x_e=1$ need to be computed. Hence, (generalized) Benders decomposition is a suitable methodology for solving the MSTN problem.  The following result states  explicitly the form of the Benders cuts that allow to use particular solutions of  \eqref{xmatheu0} to solve MSTN.

\begin{thm}
Let $\bar{x} \in \mathcal{ST}_G$ and $u(\bar x)$ its associated \eqref{xmatheu0} solution. Then,
$$ \Theta \geq u(\bar x) +  \dsum_{e: \bar x_e=1} \widehat{U}_e (x_e-1)  +  \dsum_{e: \bar x_e=0} \widehat{u}_e x_e,$$
\noindent is a valid cut for MSTN, where, as before, $\Theta=\dsum_{e\in E}\theta_e$ with $\theta_e \ge 0$, $e\in E$; and $\widehat{U}_e$ and $\widehat{u}_e$ are upper bounds on the maximum and minimum values of the distance of edge $e$, respectively (for instance, the ones introduced after Example \ref{ex:1}).

\end{thm}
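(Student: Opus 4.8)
The plan is to prove the stated inequality at every feasible solution of the model, which first reduces to a purely geometric lower bound on the optimal value $u(x)$ of the subproblem \eqref{xmatheu0}. First I would fix an arbitrary feasible $(x,y,u,\theta)$ of \ref{sec-mstn} (equivalently of \ref{mtz}) and set $T=\{e\in E:x_e=1\}$, $\bar T=\{e\in E:\bar x_e=1\}$. For $e\in T$ constraint \eqref{linz} reads $\theta_e\ge u_e$, and \eqref{u1} gives $u_e\ge\|y_v-y_w\|$; since moreover $y\in\mathcal Y$ is feasible for \eqref{xmatheu0} with the tree $x$, one gets
$$\Theta=\sum_{e\in E}\theta_e\ \ge\ \sum_{e\in T}\theta_e\ \ge\ \sum_{e\in T}u_e\ \ge\ \sum_{e\in T}\|y_v-y_w\|\ \ge\ u(x),$$
using $\theta_e\ge 0$ in the first step. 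Hence it suffices to establish the tree-only inequality $u(x)\ge u(\bar x)+\sum_{e\in\bar T}\widehat U_e(x_e-1)+\sum_{e\notin\bar T}\widehat u_e x_e$ for every $x\in\mathcal{ST}_G$.

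To prove it I would let $y^\star\in\mathcal Y$ be an optimal solution of \eqref{xmatheu0} with the tree $x$, so $u(x)=\sum_{e\in T}\|y^\star_v-y^\star_w\|$. The key point — and the mechanism behind the whole cut — is that the feasible set of \eqref{xmatheu0} is $\mathcal Y$, which does not depend on the tree; hence $y^\star$ is also feasible for \eqref{xmatheu0} with the tree $\bar x$, giving $u(\bar x)\le\sum_{e\in\bar T}\|y^\star_v-y^\star_w\|$. Subtracting these two relations and cancelling the distances on the common edges $e\in T\cap\bar T$ yields
$$u(x)-u(\bar x)\ \ge\ \sum_{e\in T\setminus\bar T}\|y^\star_v-y^\star_w\|\ -\ \sum_{e\in\bar T\setminus T}\|y^\star_v-y^\star_w\|.$$
Since $y^\star_v\in\mathcal N_v$ for all $v$, each distance lies between the extreme inter-neighborhood distances, so $\|y^\star_v-y^\star_w\|\ge\widetilde u_e\ge\widehat u_e$ for $e\in T\setminus\bar T$ and $\|y^\star_v-y^\star_w\|\le\widetilde U_e\le\widehat U_e$ for $e\in\bar T\setminus T$ (here $\widehat u_e\le\widetilde u_e$ and $\widehat U_e\ge\widetilde U_e$ are the valid bounds on the extreme distances $\widetilde u_e,\widetilde U_e$ introduced after Example \ref{ex:1}, which may be taken to be those exact values). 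This gives $u(x)-u(\bar x)\ge\sum_{e\in T\setminus\bar T}\widehat u_e-\sum_{e\in\bar T\setminus T}\widehat U_e$.

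It remains to match this with the right-hand side of the statement, which is just bookkeeping over the partition of $E$ into the blocks $T\cap\bar T$, $T\setminus\bar T$, $\bar T\setminus T$ and $E\setminus(T\cup\bar T)$: in $\sum_{e\in\bar T}\widehat U_e(x_e-1)$ only edges with $e\in\bar T$, $x_e=0$ (that is, $e\in\bar T\setminus T$) contribute, giving $-\sum_{e\in\bar T\setminus T}\widehat U_e$, while in $\sum_{e\notin\bar T}\widehat u_e x_e$ only edges with $e\notin\bar T$, $x_e=1$ (that is, $e\in T\setminus\bar T$) contribute, giving $\sum_{e\in T\setminus\bar T}\widehat u_e$. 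Thus the right-hand side equals $u(\bar x)-\sum_{e\in\bar T\setminus T}\widehat U_e+\sum_{e\in T\setminus\bar T}\widehat u_e$, and combining with the previous display gives $u(x)\ge$ RHS, hence $\Theta\ge$ RHS, i.e. the inequality is a valid cut. I do not foresee a genuine obstacle: the structural content — an optimizer of one subproblem instance being feasible, hence a valid upper bound, for a sibling instance — is immediate precisely because the subproblem's feasible region is independent of $x$, and the only thing requiring care is keeping the edge-set partition and the coefficients straight.
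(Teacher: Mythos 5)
Your proof is correct, but it takes a genuinely different and more elementary route than the paper's. The paper derives the cut through the generalized Benders machinery: it rewrites the subproblem \eqref{xmatheu0} with McCormick-type constraints as \eqref{refo-PU}, invokes Slater's condition, identifies the optimal Lagrange multipliers as $\lambda^*_e=\bar x_e$ and $\mu^*_e=1-\bar x_e$ via complementary slackness, and then obtains the cut by evaluating the Lagrangean dual function at these multipliers for an arbitrary $x$. You instead exploit the single structural fact that the feasible region of \eqref{xmatheu0} is $\mathcal{Y}$ (equivalently $\mathcal{U}$), independent of the tree, so an optimizer $y^\star$ for the instance with tree $x$ is feasible for the instance with tree $\bar x$; subtracting the two resulting bounds, cancelling common edges, and bounding the residual distances by $\widetilde u_e$ and $\widetilde U_e$ gives exactly the cut, after the bookkeeping over the partition $T\cap\bar T$, $T\setminus\bar T$, $\bar T\setminus T$. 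Your preliminary step $\Theta\ge u(x)$ at any feasible point, via \eqref{linz} and \eqref{u1}, is also needed and correctly handled. What your argument buys is transparency: it shows directly why the cut is tight at $x=\bar x$ and makes the remark following the theorem (that $u(x)\ge u(\bar x)-\widehat U_{e_1}+\widehat u_{e_2}$ under an edge swap) an immediate corollary rather than an interpretation; it also sidesteps the somewhat delicate multiplier identification in the paper's proof. What the paper's route buys is that it exhibits the cuts as bona fide generalized Benders optimality cuts, which is what licenses the appeal to Geoffrion's convergence theorem in the finiteness result that follows. One small point you handled correctly but should flag: the theorem's phrasing ``upper bounds on the maximum and minimum values'' must be read, as you do, as $\widehat U_e\ge\widetilde U_e$ and $\widehat u_e\le\widetilde u_e$, since an overestimate of $\widetilde u_e$ on the edges of $T\setminus\bar T$ would destroy validity.
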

\begin{proof}

Let us consider the following equivalent reformulation of \eqref{xmatheu0} based on the Mckormick linearization of the bilinear terms of the objective function in the original MSTN formulation:

\begin{align}
u(\bar x) = \min & \dsum_{e \in E} \theta_e \nonumber\\
\text{s.t. } &  \theta_e \geq u_e + \widehat{U}_e (\bar{x}_e-1),\qquad\qquad e\in E\tag{${\rm RPU_{\bar{x}}}$}\label{refo-PU}\\
& \theta_e \geq \widehat{u}_e \bar x_e, \qquad\qquad\qquad\qquad\qquad e\in E\nonumber\\
 &  u \in \mathcal{U}.\nonumber
\end{align}
Note that the reformulation \eqref{refo-PU} is a convex optimization problem, and  Slater condition holds \cite{Slater50}. Hence, (necessary and sufficient) optimality conditions can be derived from the following Lagrangean function associated with \eqref{xmatheu0}:
$$
L(\bar x,  \theta, u; \lambda, \mu, \nu) = \dsum_{e \in E}  \theta_e - \dsum_{e \in E} \lambda_e ( \theta_e-u_e+\widehat{U}_e(1-\bar x_e)) - \dsum_{e \in E} \mu_e ( \theta_e-\widehat{u}_e\bar x_e) + \nu^t G(u),
$$
where $G(u) \leq 0$ are the constraints (only involving $u$-variables) defining $\mathcal{U}$.

Let $ \theta^*_e$, $u^*_e$, $e\in E$, be an optimal solution to \eqref{refo-PU} and $\lambda^*$, $\mu^*$ and $\nu^*$ the associated optimal multipliers.
Then, $\lambda^*$ and $\mu^*$ must satisfy:
\begin{equation}\label{eq:lagr-mult}
1 - \lambda^*_e - \mu^*_e = 0, \quad \forall e \in E,
\end{equation}
together with the complementary slackness constraints:
\begin{align*}
\lambda^*_e ( \theta^*_e-u^*_e+\widehat{U}_e(1-\bar x_e)) =0, &\quad  \forall e \in E,\\
\mu^*_e (\theta^*_e-\widehat{u}_e\bar x_e) =0, &\quad \forall e \in E.
\end{align*}
If $\bar x_e=1$, then  $\mu^*_e=0$, since $u^*\ge \hat u$ and $\theta^*_e\ge u^*_e$;  and by \eqref{eq:lagr-mult}, $\lambda^*_e=1$.
Besides, if $\bar x_e=0$, since  $u^*_e < \widehat{U}_e$, then $\theta_e^*=0$ and $\lambda^*_e=0$.
Thus, we conclude that:
\begin{equation}
\lambda^*=\bar x_e \text{ and } \mu_e^*=1-\bar x_e, \quad \forall  e\in E.\label{lambda-mu}
\end{equation}

On the other hand, since $u(x)=\Theta=\dsum_{e\in E}\theta_e=\max_{\lambda\geq0, \mu\geq0} \min_{\theta, u}L(x,  \theta, u; \lambda, \mu, \nu)$  also holds for any ${x} \in \mathcal{ST}_G$, we have that

\begin{align*}
\Theta &\; \geq \min_{\theta, u} L(\bar x,  \theta,u; \lambda^*, \mu^*, \nu^*) \\
 =&\; \dsum_{e \in E}  \theta^*_e - \dsum_{e \in E} \lambda^*_e ( \theta^*_e-u^*_e+\widehat{U}_e(1-\bar x_e)) - \dsum_{e \in E} \mu^*_e ( \theta^*_e-\widehat{u}_e \bar x_e) + {\nu^*}^t G(u^*) \\
=& \;\dsum_{e \in E}  \theta^*_e - \dsum_{e \in E} \lambda^*_e ( \theta^*_e-u^*_e+\widehat{U}_e(1- x_e)) - \dsum_{e \in E} \mu^*_e ( \theta^*_e-\widehat{u}_e  x_e) + {\nu^*}^t G(u^*)\\
& - \dsum_{e \in E} \lambda^*_e (\widehat{U}_e(1-\bar x_e)) + \dsum_{e \in E} \lambda^*_e (\widehat{U}_e(1- x_e)) - \dsum_{e \in E} \mu^*_e (\widehat{u}_e x_e)+\dsum_{e \in E} \mu^*_e (\widehat{u}_e \bar x_e)\\
=& \;u(\bar x) + \dsum_{e\in E} \lambda_e^* \widehat{U}_e (x_e-\bar x_e) +  \dsum_{e \in E} \mu^*_e \widehat{u}_e (x_e-\bar x_e)\\
=& \;u(\bar x) + \dsum_{e\in E: \bar x_e=1}  \widehat{U}_e (x_e-1) +  \dsum_{e \in E: \bar x_e=0}  \widehat{u}_e x_e.
 \end{align*}
This concludes the proof.
\end{proof}

Note that, by construction, the above generalized Benders cuts imply that, we can compare the value of the subproblem \eqref{xmatheu0} associated with a given spanning tree $\bar x \in \ST_G$, $u(\bar x)$, with the value of the  subproblem $({\rm RPU_{{x}}})$ associated with a different spanning tree $x \in \ST_G$, $u(x)$. In particular, if there exist $e_1, e_2\in E$ with $\bar x_{e_1}=1$ and $x_{e_1}=0$, and $\bar x_{e_2}=0$ and $x_{e_2}=1$, then the value of $u(x)$ is at least $u(\bar x) - \widehat{U}_{e_1} + \widehat{u}_{e_2}$. In other words, the difference between the values of the two subproblems is bounded by the maximum amount that can be saved (in the cost function) by removing $e_1$, plus the minimum gain that can be attained by adding $e_2$.
Therefore, the relaxed master problem at the $K$-th iteration of the  row-generation solution algorithm can be stated as:

\begin{align}
\Theta^* = \min \qquad &  \Theta \nonumber\\
    & \hspace*{-1cm}  \Theta \geq u(\bar x^k) +  \dsum_{e: \bar x^k_e=1} \widehat{U}_e (x_e - 1)  +  \dsum_{e: \bar x^k_e=0} \widehat{u}_e x_e, \; k=1, \ldots, K,\label{xmatheu000}\\
 & x \in \mathcal{ST}_G.\nonumber
\end{align}

The reader may note that the cuts (\ref{xmatheu000}) can be interpreted as some form of lifting of the surrogated McCorminck inequalities (\ref{linz}), after projecting out the $u$ variables in formulation (\ref{rl-mstn}).

Using the above cuts algorithmically, gives rise to the solution scheme described in Algorithm \ref{alg:benders}:
\medskip

\begin{algorithm}[H]
\SetKwInOut{Input}{Initialization}\SetKwInOut{Output}{output}

 \Input{Let $x^0\in \mathcal{ST}_G$ be an initial  solution and $\varepsilon$ a given threshold value.\\
 Set $LB=0$, $UB=+\infty$, $\bar x=x^0$.}

 \While{$|UB-LB|>\varepsilon$}{
 \begin{enumerate}
\item Solve \eqref{xmatheu0} for $\overline x$ to get $u(\bar{x})$.
\item Add the cut $ \Theta \geq u(\bar x) +  \dsum_{e: \bar x_e=1} \widehat{U}_e (x_e - 1)  +  \dsum_{e: \bar x_e=0} \widehat{u}_e x_e$ to the current master problem.
\item Obtain the optimal value $\bar{\Theta}$ to the current master problem, and its associated solution $\bar x$.
\item Update $LB=\max\{LB, \bar{\Theta}\}$ and $UB=\min\{UB, \sum_{e\in E}u(\bar{x})_e\bar{x}_e\}$
\end{enumerate}
}

 \caption{Decomposition Algorithm for solving MSTN.\label{alg:benders}}
\end{algorithm}

The stopping criterion is that the gap between the upper and lower bound does not exceed the fixed threshold value $\varepsilon$.
\medskip

\begin{thm}
The decomposition-based solution scheme of Algorithm \ref{alg:benders} terminates in a finite number of steps (for any given $\varepsilon\geq 0$). Furthermore, if $\varepsilon \le \min\{ \tilde{U}_{e_1}-\tilde{u}_{e_2}\ge 0: e_1\neq e_2 \in E\}$, it outputs an optimal MSTN.
\end{thm}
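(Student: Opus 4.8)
The plan is to establish the two claims separately: (i) finite termination for any $\varepsilon \ge 0$, and (ii) global optimality of the returned solution under the stated threshold condition. For finite termination, the key observation is that the master problem \eqref{xmatheu000} is a pure integer program over $\ST_G$, which is a finite set (there are at most $|V|^{|V|-2}$ spanning trees in a complete graph, and fewer in a general connected $G$). At each iteration we add one Benders cut associated with the current tree $\bar x^k$. I would argue that the algorithm never solves the subproblem \eqref{xmatheu0} for the same tree twice before termination: if $\bar x^{k+1} = \bar x^j$ for some $j \le k$, then the cut added at iteration $j$ is already present in the master at iteration $k+1$, and by the validity theorem this cut forces $\bar\Theta \ge u(\bar x^{k+1})$, while the upper bound satisfies $UB \le \sum_e u(\bar x^{k+1})_e \bar x^{k+1}_e = u(\bar x^{k+1})$; combined with $LB = \max\{LB,\bar\Theta\} \ge u(\bar x^{k+1}) \ge UB$, the stopping test $|UB - LB| \le \varepsilon$ is met. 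Hence each tree triggers at most one fresh iteration, and the loop halts after at most $|\ST_G|$ iterations.

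For the optimality claim, suppose the algorithm terminates at iteration $K$ with incumbent tree $x^\star$ achieving $UB = u(x^\star)$ and $LB = \bar\Theta$ the value of the last master solve, with $UB - LB \le \varepsilon$. By construction $UB = u(x^\star)$ is the cost of a genuinely feasible MSTN solution (tree $x^\star$ together with the optimal points from \eqref{xmatheu0}), so $\Theta^{\mathrm{opt}} \le UB$, where $\Theta^{\mathrm{opt}}$ is the optimal MSTN value. On the other hand, every cut in the master is valid for MSTN by the preceding theorem, so the master is a relaxation of \ref{rl-mstn}/\ref{mstn}, giving $LB \le \Theta^{\mathrm{opt}}$. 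Thus $\Theta^{\mathrm{opt}} \in [LB, UB]$ with $UB - LB \le \varepsilon$. To upgrade this to exact optimality when $\varepsilon \le \min\{\tilde U_{e_1} - \tilde u_{e_2} \ge 0 : e_1 \ne e_2 \in E\}$, I would use the discreteness of the objective gap between non-optimal and optimal trees: as noted in the remark following the cuts, for any two distinct trees $\bar x$ and $x$ differing by a single edge swap $e_1 \leftrightarrow e_2$, the values $u(\bar x)$ and $u(x)$ differ by at least $\tilde u_{e_2} - \tilde U_{e_1}$ in one direction (and symmetrically), so the smallest possible positive gap between $u(x^\star)$ and the true optimum is bounded below by $\min\{\tilde U_{e_1} - \tilde u_{e_2} > 0\}$; if $x^\star$ were suboptimal, the gap $UB - \Theta^{\mathrm{opt}}$ would exceed $\varepsilon$, contradicting $UB - LB \le \varepsilon \le \Theta^{\mathrm{opt}} - LB + (\text{gap})$. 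Choosing $\varepsilon$ below this threshold therefore forces $x^\star$ to be an optimal tree, and the accompanying points solve the associated subproblem exactly, so the output is an optimal MSTN.

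The main obstacle I anticipate is making the single-edge-swap gap argument fully rigorous: an arbitrary pair of spanning trees need not differ by just one edge, so I would invoke the standard matroid exchange property to write any transition from a suboptimal tree to an optimal one as a sequence of single-edge swaps, and then chain the per-swap bounds — being careful that the intermediate cost changes telescope correctly and that the bounds $\tilde U_{e}$, $\tilde u_{e}$ (which are the \emph{exact} extreme distances, not merely the upper-bound surrogates $\widehat U_e, \widehat u_e$ used in the cuts) are the right quantities to control the true subproblem values $u(\cdot)$. A secondary point to handle carefully is the case $\varepsilon = 0$: finite termination still holds by the no-repeat argument above, since once the optimal tree is reached its cut closes the gap to exactly zero, but one should check that the algorithm actually reaches the optimal tree rather than cycling among suboptimal ones — which again follows because each iteration either improves $LB$ strictly (a new tree entering the master with a binding cut) or certifies optimality.
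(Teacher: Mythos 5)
Your finite-termination argument is correct and is in fact more detailed than the paper's, which disposes of the whole theorem in one sentence by invoking the finiteness of $\ST_G$, the convexity of \eqref{xmatheu0}, and linear separability so that Theorem~2.4 of Geoffrion applies. What you wrote is essentially the content of that theorem unpacked: a repeated tree $\bar x^j$ makes its own cut binding at $x=\bar x^j$, forcing $\bar\Theta\ge u(\bar x^j)\ge UB$, while $LB\le\Theta^{\mathrm{opt}}\le UB$ always holds because the master is a relaxation and $UB$ is the value of a feasible MSTN solution; hence the test is met and at most $|\ST_G|$ fresh iterations occur. The $\varepsilon$-optimality conclusion $\Theta^{\mathrm{opt}}\in[LB,UB]$ with $UB-LB\le\varepsilon$ is likewise sound.

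The gap is exactly where you anticipated it, and it is not repairable by the edge-swap argument you sketch: the exchange inequality from the paper (and from the Benders cut itself) states that if $\bar x$ and $x$ differ by swapping $e_1$ out and $e_2$ in, then $u(x)\ge u(\bar x)-\widehat U_{e_1}+\widehat u_{e_2}$, i.e.\ $u(\bar x)-u(x)\le \widehat U_{e_1}-\widehat u_{e_2}$. This bounds the difference of subproblem values between two trees from \emph{above}, whereas your argument needs the minimum \emph{positive} suboptimality gap $\min\{u(x)-\Theta^{\mathrm{opt}}: x\in\ST_G,\ u(x)>\Theta^{\mathrm{opt}}\}$ to be bounded from \emph{below} by $\min\{\tilde U_{e_1}-\tilde u_{e_2}\ge 0\}$. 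Telescoping the swap bounds along a matroid-exchange sequence only compounds the problem: it yields an even larger upper bound on the cumulative difference, never a lower bound, and since the $u(\cdot)$ values arise from continuous optimization there is no a priori reason two distinct trees cannot have values closer together than the stated threshold. So the step ``$\varepsilon$ below the threshold forces $x^\star$ optimal'' does not follow from what you have. To be fair, the paper offers no justification for this part either---its citation of Geoffrion covers finite termination and $\varepsilon$-optimality, not the specific threshold claim---so the honest conclusions available are: finite termination for all $\varepsilon\ge 0$; exact optimality whenever termination is triggered by a repeated tree (since then $LB=UB$); and $\varepsilon$-optimality otherwise. Exact optimality for $\varepsilon=0$ follows, but the intermediate threshold statement would require a genuine lower bound on the minimum positive gap between tree values, which neither you nor the paper establishes.
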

\begin{proof}
The the finiteness of the number of underlying spanning trees  of $\ST_G$, the convexity of \eqref{xmatheu0} for any $\overline x \in \ST_G$,  and the linear separability of the problem assure the result by applying Theorem 2.4 in \cite{geoffrion}.
\end{proof}

To avoid the enumeration of all spanning trees of $G$, and to reduce the number of iterations, several recipes can be applied. One of them is to start with a non-empty set of cuts which  give a suitable initial representation of the lower envelope of $\Theta$.
Hence, if $\overline{\mathcal{ST}_G}$ denotes the set of trees associated with the current set of constraints \eqref{xmatheu000}, the representation we use for the master problem is:

\begin{align}
\min \qquad & \dsum_{e\in E}\theta_e \label{aaxmatheu00} \\
\text{s.t. }    & \dsum_{e\in E}\theta_e \geq u(\bar x) +  \dsum_{e: \bar x_e=1} \widehat{U}_e (x_e - 1)  +  \dsum_{e: \bar x_e=0} \widehat{u}_e x_e, \forall \bar x \in \overline{\ST_G},\label{aaxmatheu000}\\
 & \theta_e\geq \widetilde{u}_e x_e, \quad e\in E,\nonumber\\
 & x \in \ST_G.\nonumber
\end{align}

Given that the master problem exhibits a combinatorial nature, the performance of a Benders-like algorithm can be improved by embedding the cut generation mechanism within a branch-and-cut scheme. This is the current trend nowadays \cite{Fischetti1,Fischetti2}. This requires to separate the optimality cuts in addition to any other generated cuts, at the nodes of the enumeration tree. Note that this approach is also valid in our case, as the cuts \eqref{aaxmatheu000} are also valid if $\bar x$ is the solution to a linear programming relaxation of a valid MST formulation.

\subsection{Computational Experiments \label{sec:Experiments}}

The proposed decomposition approach has been tested over the same set of benchmark instances used to compared the compact formulations. Based on the results obtained in such a comparison, and also to take advantage of the possibility of adding dynamically violated SECs within the branch-\&-cut, we combine the decomposition approach with the classical SEC representation \ref{sec-mstn}. In addition to the average statistics reported in the previous tables (CPU, \#SECs, \#Nodes, GAPs, and \%Solved), we also report now the average number of Benders' type cuts, \#BendersCuts, and the gap after the exploration of the root node of the branch-\&-cut tree, \%GAP$_0$. Average results for the 4 scenarios are reported in Tables \ref{table:3} and \ref{table:4}.\\

As can be seen, the computing times required by the decomposition approach are smaller than those obtained with the MINLP formulations for the small size radii scenario and also in the small-medium size radii scenario for the 3D case. However, the results obtained for the medium-large and large size scenarios reveal that the MINLP formulations have a better performance than the decomposition scheme. Note that the cuts induced by our approach depends of the available upper and lower bounds on the lengths  of the edges in the graph. These bounds are tight for the small size radii scenarios, but far from being a representative value of the actual length of the edge in the remaining scenarios. Hence, a large number of cuts are needed to certify optimality of the solution in these cases.

\begin{table}[hbpt]
  \caption{Average results for the decomposition approach for $\R^2$ instances.}
  {\scriptsize  \begin{tabular}{|r|r|rrrrrrr|}\hline
             $r$  & $n$      & CPU & \#SEC & \#BendersCuts & \#NodesB\%B & \%GAP$_0$ & \%GAP & \%Solved \\\hline
  \multirow{12}{*}{1} &     5     & 0.0065 & 1.20  & 0.20  & 0.00  & 5.45\% &       & 100\% \\
&     6     & 0.0196 & 3.60  & 2.40  & 10.40 & 18.99\% &       & 100\% \\
&     7     & 0.0328 & 5.60  & 4.00  & 22.80 & 12.07\% &       & 100\% \\
&     8     & 0.0347 & 3.60  & 3.80  & 23.40 & 15.41\% &       & 100\% \\
&     9     & 0.0646 & 12.80 & 7.60  & 64.60 & 18.79\% &       & 100\% \\
 &    10    & 0.1796 & 26.60 & 23.40 & 180.00 & 20.06\% &       & 100\% \\
 &    11    & 0.5341 & 116.60 & 68.40 & 950.60 & 28.48\% &       & 100\% \\
 &    12    & 0.6484 & 213.20 & 71.80 & 1129.00 & 32.67\% &       & 100\% \\
 &    13    & 1.5531 & 246.20 & 167.60 & 2573.80 & 37.76\% &       & 100\% \\
 &    14    & 1.6703 & 300.60 & 177.00 & 2204.20 & 32.39\% &       & 100\% \\
 &    15    & 45.3193 & 1016.40 & 1637.40 & 23077.60 & 47.74\% &       & 100\% \\
 &    20    & 333.5085 & 1628.60 & 3721.80 & 59876.80 & 39.75\% &       & 100\% \\\hline
  \multirow{11}{*}{2} &     5     & 0.0464 & 4.20  & 6.40  & 25.60 & 29.32\% &       & 100\% \\
&     6     & 0.0730 & 6.70  & 11.40 & 50.90 & 24.67\% &       & 100\% \\
&     7     & 0.0678 & 12.20 & 10.80 & 78.20 & 28.19\% &       & 100\% \\
&     8     & 0.2743 & 21.60 & 43.60 & 311.60 & 41.06\% &       & 100\% \\
&     9     & 0.3111 & 55.20 & 46.80 & 492.20 & 30.63\% &       & 100\% \\
 &    10    & 0.4646 & 78.00 & 66.60 & 721.40 & 33.22\% &       & 100\% \\
 &    11    & 1.3472 & 245.40 & 167.80 & 2382.60 & 35.11\% &       & 100\% \\
 &    12    & 160.8519 & 864.80 & 3027.00 & 36569.60 & 61.72\% &       & 100\% \\
 &    13    & 326.1787 & 1598.20 & 2800.40 & 50047.80 & 50.47\% &       & 100\% \\
 &    14    & 226.5067 & 2024.20 & 6463.60 & 96243.00 & 43.07\% &       & 100\% \\
 &    15    & 5824.7652 & 8023.00 & 18775.80 & 284590.80 & 73.80\% & 3.76\% & 20\% \\\hline
  \multirow{11}{*}{3} &     5     & 0.1152 & 3.80  & 5.80  & 24.40 & 27.67\% &       & 100\% \\
&     7     & 0.4851 & 58.10 & 93.50 & 712.60 & 50.67\% &       & 100\% \\
&     8     & 3.2475 & 158.20 & 526.80 & 3963.60 & 59.22\% &       & 100\% \\
&     9     & 17.3417 & 521.00 & 1492.40 & 14560.00 & 67.32\% &       & 100\% \\
 &    10    & 5.8312 & 226.20 & 595.00 & 5933.40 & 50.17\% &       & 100\% \\
 &    11    & 2603.6210 & 4308.40 & 12569.00 & 168712.00 & 75.77\% & 40.36\% & 80\% \\
 &    12    &  $>$7200 & 5223.40 & 23172.40 & 275986.80 & 81.98\% & 22.01\% & 0\% \\
 &    13    &  $>$7200 & 7191.60 & 20230.60 & 282031.60 & 85.37\% & 20.33\% & 0\% \\
 &    14    &  $>$7200 & 15425.00 & 14481.80 & 311567.60 & 90.64\% & 53.59\% & 0\% \\
 &    15    &  $>$7200 & 11379.40 & 13846.80 & 310549.80 & 83.69\% & 35.16\% & 0\% \\\hline
  \multirow{11}{*}{4} &     5     & 0.0476& 3.80  & 5.80  & 24.20 & 33.07\% &       & 100\% \\
&     6     & 0.3993 & 36.20 & 83.80 & 428.60 & 56.12\% &       & 100\% \\
&     8     & 2.9985 & 187.20 & 424.40 & 3055.80 & 62.99\% &       & 100\% \\
&     9     & 53.7040 & 418.00 & 2631.80 & 23586.40 & 67.46\% &       & 100\% \\
 &    10    & 1013.3837 & 1444.00 & 7611.00 & 72987.20 & 82.73\% &       & 100\% \\
 &    11    & 4256.8194 & 4636.60 & 16430.60 & 204272.20 & 84.16\% & 30.36\% & 60\% \\
 &    12    & 6232.3367 & 6569.80 & 20400.00 & 250014.00 & 77.37\% & 16.60\% & 20\% \\
 &    13    & $>$7200 & 8218.80 & 19321.40 & 299586.40 & 85.78\% & 29.58\% & 0\% \\
 &    14    &  $>$7200 & 13880.00 & 13080.80 & 336546.40 & 93.16\% & 71.25\% & 0\% \\
 &    15    &  $>$7200 & 16128.60 & 12538.00 & 326406.20 & 94.80\% & 50.14\% & 0\% \\\hline
              \end{tabular}}
  \label{table:3}
  \end{table}

  \begin{table}[hbpt]
  \caption{Average results for decomposition approach  for $\R^3$ instances.}
  {\scriptsize  \begin{tabular}{|r|r|rrrrrrr|}\hline
             $r$  & $n$      & CPU & \#SEC & \#BendersCuts & \#NodesB\%B & \%GAP$_0$ & \%GAP & \%Solved \\\hline
\multirow{12}{*}{1}&       5     & 0.0063 & 0.80  & 0.00  & 0.00  & 2.28\% &       & 100\% \\
&     6     & 0.0125 & 1.60  & 0.60  & 0.00  & 4.53\% &       & 100\% \\
&     7     & 0.0138 & 2.20  & 1.80  & 9.80  & 9.31\% &       & 100\% \\
&     8     & 0.0445 & 3.60  & 3.40  & 18.80 & 12.49\% &       & 100\% \\
 &    9     & 0.0573 & 6.00  & 5.80  & 28.60 & 9.66\% &       & 100\% \\
 &    10    & 0.0883 & 9.60  & 9.20  & 72.60 & 8.28\% &       & 100\% \\
 &    11    & 0.2478 & 30.20 & 17.20 & 162.00 & 17.26\% &       & 100\% \\
 &    12    & 0.2455 & 59.00 & 25.00 & 314.20 & 16.00\% &       & 100\% \\
 &    13    & 0.8280 & 87.60 & 85.20 & 1035.40 & 17.73\% &       & 100\% \\
 &    14    & 1.1512 & 194.80 & 95.20 & 1535.20 & 11.92\% &       & 100\% \\
 &    15    & 1.7121 & 264.00 & 130.20 & 1761.60 & 18.39\% &       & 100\% \\
  &    20    & 8.2175 & 702.20 & 377.80 & 7398.60 & 16.68\% &       & 100\% \\\hline
\multirow{11}{*}{2}&     5     & 0.0218 & 3.80  & 2.40  & 7.80  & 12.34\% &       & 100\% \\
&     6     & 0.0292 & 2.40  & 3.30  & 12.50 & 8.57\% &       & 100\% \\
&     7     & 0.0388 & 4.20  & 4.60  & 18.80 & 18.30\% &       & 100\% \\
 &    8     & 0.2397 & 24.00 & 33.40 & 247.00 & 23.13\% &       & 100\% \\
 &    9     & 0.2389 & 26.00 & 32.60 & 304.00 & 18.73\% &       & 100\% \\
 &    10    & 0.2859 & 50.80 & 33.00 & 398.00 & 12.74\% &       & 100\% \\
 &    11    & 0.5181 & 58.00 & 57.80 & 555.80 & 20.94\% &       & 100\% \\
 &    12    & 4.8255 & 263.20 & 369.80 & 5574.80 & 28.77\% &       & 100\% \\
 &    13    & 5.6111 & 498.60 & 635.80 & 9576.20 & 28.87\% &       & 100\% \\
  &    14    & 11.3739 & 1388.00 & 1459.40 & 32630.40 & 27.78\% &       & 100\% \\
&     15    & 35.4121 & 1873.00 & 2982.00 & 67628.20 & 33.80\% &       & 100\% \\\hline
\multirow{11}{*}{3}&    5&   0.0281 & 2.80  & 2.60  & 10.00 & 16.98\% &       & 100\% \\
 &6 &   0.2437 & 26.80 & 43.80 & 276.40 & 29.17\% &       & 100\% \\
 &7 &   0.2725 & 39.60 & 42.60 & 348.20 & 38.34\% &       & 100\% \\
 &8 &   1.5945 & 131.40 & 235.40 & 1915.20 & 49.20\% &       & 100\% \\
 &9 &   3.9492 & 292.40 & 1025.80 & 9022.00 & 45.81\% &       & 100\% \\
&10&   2.5790 & 313.00 & 272.40 & 3468.00 & 26.01\% &       & 100\% \\
&11 & 55.9248 & 689.40 & 1979.60 & 26140.60 & 42.86\% &       & 100\% \\
&12 &1258.5048 & 2060.40 & 8294.40 & 130089.80 & 47.82\% &       & 100\% \\
&13 &    3005.2253 & 5083.40 & 10824.20 & 212760.60 & 44.99\% & 3.81\% & 60\% \\
&14 &   $>$7200 & 9029.40 & 15154.60 & 288000.20 & 53.37\% & 17.53\% & 0\% \\
&15 &    1751.1580 & 9504.80 & 10049.00 & 243900.00 & 40.75\% &       & 100\%\\\hline
\multirow{11}{*}{4} &    5     & 0.0312 & 3.00  & 3.60  & 16.60 & 19.69\% &       & 100\% \\
 &    6     & 0.1750 & 13.80 & 29.60 & 122.20 & 27.05\% &       & 100\% \\
  &   7     & 0.6724 & 54.80 & 94.20 &  543.60 & 22.12\% &       & 100\% \\
 &    8     & 1.6626 & 162.80 & 218.80 & 1898.40 & 46.48\% &       & 100\% \\
 &    9     & 9.5678 & 326.60 & 916.20 & 8138.60 & 45.42\% &       & 100\% \\
 &    10    & 22.7335 & 576.60 & 1450.40 & 17267.40 & 47.61\% &       & 100\% \\
 &    11    & 107.0304 & 1037.60 & 3051.40 & 40153.60 & 50.56\% &       & 100\% \\
 &   12    & 1005.8061 & 1904.80 & 6533.00 & 99639.80 & 50.15\% &       & 100\% \\
  &  13    & 999.9207 & 5066.20 & 12905.60 & 211964.00 & 50.13\% &       & 100\% \\
   & 14    & 7200.3120 & 9951.60 & 14550.00 & 285772.40 & 70.62\% & 30.61\% & 0\% \\
    &15    & 6123.5383 & 12659.40 & 12203.20 & 266014.20 & 55.75\% & 16.35\% & 20\% \\\hline
                  \end{tabular}}
  \label{table:4}
  \end{table}

\section{A MathHeuristic for MSTN\label{sec:Mathheuristic}}
The results of the computational experiments section indicate that MSTN instances with up to less than 15 vertices can be optimally solved  within the allowed time limit, but as the sizes of the instances increase the computing times become prohibitive.
Below we present a mathheuristic alternative to obtain near-optimal solutions to larger MSTN instances. The main idea under the proposed algorithm is based on the observation that  the problem is a biconvex problem, since fixing any of the set of variables the problem becomes an efficiently solvable optimization problem (in case $x$ is fixed, the problem is a continuous SOCP, while if $u$ is fixed, the problem is a standard MST problem).

The mathheuristic consists of two embedded loops.The outer loop is a multistart procedure. The input of each iteration in this loop is a spanning tree, which will be used in the initial iteration of the inner loop. The number of iterations of the outer loop is a parameter related to the initial spanning tree generation mechanism that we use, which will be explained later on.

The rationale of the inner loop is to alternate in solving subproblems in the solution spaces of the two main sets of variables ($x$ and $u$).  We proceed iteratively, and each iteration consists of solving a pair  of subproblems, one in each space of variables. When solving the subproblem in one solution space we fix the values of the variables of the other space. 

Formally, let $(P_{\bar{x}u})$ and $(P_{{x}\bar{u}})$ respectively denote the subproblems of the generic MSTN formulation ${\rm P_{xu}}$ of Section \ref{sec:Formulations}, when $\bar{x}$ and $\bar{u}$  are fixed. That is,

\begin{minipage}[H]{0.4\textwidth}
\begin{align}
\min & \dsum_{e \in E}u_e \bar{x}_e\tag{${\rm PU_{\bar{x}}}$}\label{xmatheu0}\\
\text{s.t. } &  u \in \mathcal{U}\nonumber
\end{align}
\end{minipage}
\begin{minipage}[H]{0.4\textwidth}
\begin{align}
\text{and     }\qquad\min & \dsum_{e \in E}\bar{u}_e x_e\tag{${\rm PX_{\bar{u}}}$}\label{umatheu0}\\
\text{s.t. } & x \in \mathcal{ST}_G.\nonumber
\end{align}
\end{minipage}

\bigskip

Figure \ref{fig:Flowchart} shows a flowchart of the inner loop of the mathheuristic. We start with a given spanning tree $T^0$ associated with a solution $x^0$.
In the $k$-th iteration, we compute the distances $u(x^k)$ in the current  tree $T^k$ and update the vector $\bar{u}^{k+1}$ according to $\bar{u}^k$ and $u(x^k)$.
In the first iteration we use the distance lower bounds $\bar{u}^0=\widetilde{u}$. At each iteration $k>0$ we first solve problem $({\rm PX_{\bar{u}^k}})$ and then compute the vertices distances $u(x^k)$ in its optimal tree $T^k$, by solving $({\rm PU_{x^k}})$. All components  $\bar{u}^k_e$ associated with edges  $e\in T^k$ are updated to the corresponding component of the distances vector $u(x^k)$. The remaining components remain unchanged. The procedure terminates when two consecutive iterations produce the same tree or a maximum number of iteration is attained.

\begin{figure}[h]
\tikzstyle{block} = [rectangle, draw,text width=3.5cm, text centered, rounded corners, minimum height=4em]
\tikzstyle{block2} = [rectangle, draw,text width=4.5cm, text centered, rounded corners, minimum height=4em]
\tikzstyle{line} = [draw, -latex']
\tikzstyle{cloud1} = [draw, circle, fill=gray!10, text width=1.15cm, text centered]
 \tikzstyle{cloud2} = [draw, ellipse, minimum height=1.7em, text width=1.15cm, text centered]

{\small\begin{tikzpicture}[scale=0.5,node distance = 3cm, every text node part/.style={align=center}]
    \node[block] (px) {Solve $({\rm PX_{\bar{u}^k}})$ \\ $\downarrow$\\$T^k$ associated with $x^k$};
    \node [block, below of=px] (dist) {Solve $({\rm PU_{x^k}})$ \\ $\downarrow$\\ obtain distances $u(x^k)$};
        \node [cloud1, left of=dist,node distance=4cm] (init) {{\scriptsize$k\leftarrow 0$} \\ $T^0$, $x^0$};
     \node [block2, right of=dist, node distance=4.5cm] (update) {Update $\bar u^k$: \bigskip\\
   $\bar u^{k+1}_ e = \left\{\begin{array}{cl} u_e(x^k) & \mbox{if $e\in T^k$,}\\\bar u^{k}_ e & \mbox{otherwise}\end{array}\right.$};

    \node [cloud2, right of=px, node distance=4.5cm] (k) {\scriptsize$k \leftarrow k+1$};
    \path [line,dashed] (init) -- (dist);
    \path [line] (px) -- (dist);
    \path [line] (dist) -- (update);
    \path [line] (update) --  (k);
    \path [line] (k) -- (px);
\end{tikzpicture}}

        \caption{Flowchart of the inner loop of the mathheuristic}\label{fig:Flowchart}
 \end{figure}
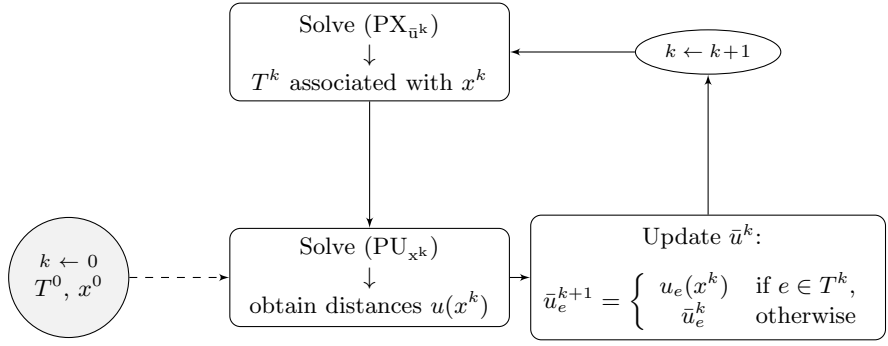%

For the sake of analyzing the quality of solutions obtained with the mathheuristic we introduce the notion of \textit{partial optimal MSTN} adapting the notation in \cite{WH} for the general case of minimizing a non-separable function subject to disjoint constraints.

\begin{defn}[Partial Optimum MSTN]
Let $\bar x \in \ST_G$ and $\bar u \in \mathcal{U}$. $(\bar x, \bar u)$ is said a partial optimum MSTN if:
$$
\dsum_{e\in E} \bar x_e \bar u_e \leq \dsum_{e\in E} x_e \bar u_e \quad and \quad \dsum_{e\in E} \bar x_e \bar u_e \leq \dsum_{e\in E} \bar x_e u_e
$$
for all $x \in \ST_G$ and $u \in \mathcal{U}$.
\end{defn}

Observe that a partial optimum MSTN $(\bar x, \bar u)$ implies that $\bar x$ is a MST for the weights $\bar u$ and that $\bar u$ are the optimal distances with respect to $\bar x$. The following result states the partial optimality of the solutions generated by the proposed mathheuristic.
\begin{thm}
The sequence of objective values produced at the inner loop of the mathheuristic, corresponding to a given initial solution,  converges monotonically to a partial optimum MSTN.
\end{thm}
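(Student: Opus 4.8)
The plan is to show that the sequence of objective values $\{f(x^k,\bar u^k)\}_k$ (where $f(x,u)=\sum_{e\in E}u_ex_e$) is nonincreasing and bounded below, hence convergent, and then to argue that the limit corresponds to a fixed point of the alternating procedure, which is exactly a partial optimum MSTN.

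First I would set up the right quantities. Observe that although the algorithm maintains the full vector $\bar u^k$, only the components on the current tree matter for the objective: if $x^k$ is the optimal tree of $(\mathrm{PX}_{\bar u^k})$, then $\sum_{e\in E}\bar u^k_e x^k_e = \sum_{e\in T^k}\bar u^k_e x^k_e$, and after the update step $\bar u^{k+1}_e = u_e(x^k)$ for $e\in T^k$, so $\sum_{e\in E}\bar u^{k+1}_e x^k_e = \sum_{e\in T^k}u_e(x^k) = u(x^k)$, which is precisely the true MSTN cost of the tree $T^k$. So the natural monotone quantity to track is $g_k := u(x^k) = \sum_{e\in T^k}u_e(x^k)$, the genuine cost of the $k$-th tree.

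The key chain of inequalities is then: $g_{k} = \sum_{e\in E}\bar u^{k+1}_e x^k_e \ge \min_{x\in\ST_G}\sum_{e\in E}\bar u^{k+1}_e x_e = \sum_{e\in E}\bar u^{k+1}_e x^{k+1}_e$, the last equality because $x^{k+1}$ solves $(\mathrm{PX}_{\bar u^{k+1}})$. Next, $\sum_{e\in E}\bar u^{k+1}_e x^{k+1}_e = \sum_{e\in T^{k+1}}\bar u^{k+1}_e \ge \sum_{e\in T^{k+1}}u_e(x^{k+1}) = g_{k+1}$; here the inequality uses that, componentwise, $\bar u^{k+1}_e \ge u_e(x^{k+1})$ for \emph{every} edge $e$ — this is the crux and needs care. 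It holds because each component $\bar u^{k+1}_e$ is either $u_e(x^j)$ for some earlier iterate $j$ (or the initial lower bound $\widetilde u_e$), and $u_e(x^j) = \|y_v(x^j)-y_w(x^j)\|$ for some feasible $y\in\mathcal Y$, whereas $u_e(x^{k+1})$ is the \emph{minimum} of $\|y_v-y_w\|$ over $y\in\mathcal Y$ subject to the constraints of $(\mathrm{PU}_{x^{k+1}})$ restricted to the edges of $T^{k+1}$ — but since $e$ may not lie in $T^{k+1}$ one must check that $u_e(x^{k+1})$, when interpreted as the distance induced by the optimal $y$-configuration for $T^{k+1}$, is still a genuine lower bound for any individual-edge distance; this follows because $\widetilde u_e = \min\{\|y_v-y_w\|: y_v\in\mathcal N_v, y_w\in\mathcal N_w\}$ is the global minimum over \emph{all} feasible placements, and $\widetilde u_e \le u_e(x^{k+1})$ is \emph{not} what we need — rather we need the opposite direction, so the cleanest argument is to note that $\bar u^{k+1}_e$ is always a valid \emph{upper} bound on the optimal-subproblem distance that would be assigned to $e$, which I would make precise via an invariant maintained inductively: $\bar u^k_e \ge \widetilde u_e$ for all $e$, combined with the fact that the $(\mathrm{PU})$ subproblem only decreases distances on tree edges.

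Thus $\{g_k\}$ is nonincreasing and bounded below by $0$ (or by $\sum_{e\in T}\widetilde u_e$ over the cheapest tree), hence convergent. For the final assertion that the limit is a partial optimum, I would argue that since $\ST_G$ is finite, the sequence of trees $T^k$ is eventually constant, say $T^k = \bar x$ for all large $k$; then $\bar u^{k+1}$ stabilizes to $\bar u$ with $\bar u_e = u_e(\bar x)$ on $e\in\bar x$, so $\bar u$ are the optimal distances for $\bar x$ (second defining inequality of partial optimum), and because $\bar x$ remains optimal for $(\mathrm{PX}_{\bar u})$, $\bar x$ is an MST for weights $\bar u$ (first inequality). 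The main obstacle is the componentwise monotonicity $\bar u^{k+1}_e \ge u_e(x^{k+1})$ for non-tree edges: this requires the invariant that every stored component dominates the corresponding optimal subproblem value, which I would establish by induction using $\bar u^0 = \widetilde u$ as the base case and the fact that each $(\mathrm{PU})$ solve replaces tree-edge components by values that are themselves $\ge \widetilde u_e$ and do not exceed the previous stored value.
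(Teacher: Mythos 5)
Your overall strategy coincides with the paper's: both are the standard alternate convex search argument --- the objective values form a non-increasing sequence bounded below, hence convergent, and the limit is a fixed point of the two block minimizations, i.e.\ a partial optimum. The quantity you track, $g_k=\sum_{e\in T^k}u_e(x^k)$, equals the paper's $f(x^k,u^k)=\sum_{e\in E}x^k_eu^k_e$, because the stored vector agrees with the optimal subproblem distances on the tree edges; up to that point the two proofs are the same.

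The genuine problem is the step you yourself single out as the crux, and your patch does not close it. You need $\sum_{e\in T^{k+1}}\bar u^{k+1}_e\ge\sum_{e\in T^{k+1}}u_e(x^{k+1})$, and you try to obtain it componentwise from the invariant $\bar u^k_e\ge\widetilde u_e$ together with the claim that the $(\mathrm{PU})$ solve ``only decreases distances on tree edges.'' Both halves point the wrong way: for an edge $e\in T^{k+1}$ that has never belonged to an earlier tree, the stored value is still $\bar u^{k+1}_e=\widetilde u_e$, while $u_e(x^{k+1})$ is the distance induced on $e$ by the placement that is optimal for the whole tree $T^{k+1}$, so $u_e(x^{k+1})\ge\widetilde u_e=\bar u^{k+1}_e$ --- the opposite of what you need, with strict inequality whenever the individual minimum on $e$ is not attained by the simultaneous optimum. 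The paper avoids any edge-by-edge comparison: it uses only that $x^{k+1}$ minimizes $f(\cdot,u^k)$ over $\ST_G$ and that $u(x^{k+1})$ minimizes $f(x^{k+1},\cdot)$ over $\mathcal{U}$, which gives $f(x^{k+1},u^{k+1})=\min_{u\in\mathcal{U}}f(x^{k+1},u)\le f(x^{k+1},u^k)\le f(x^k,u^k)$, the first inequality requiring only that the stored vector $u^k$ be feasible for the continuous subproblem, i.e.\ $u^k\in\mathcal{U}$. That aggregate argument is the one to adopt. To be fair, your instinct was picking at a real soft spot: whether a stored vector that still carries components equal to $\widetilde u_e$ on never-visited edges actually lies in $\mathcal{U}$ is precisely the hypothesis the aggregate inequality rests on, and the paper asserts it without comment. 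As written, however, your chain of inequalities is broken at its second link, so the monotonicity claim is not established.
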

\begin{proof}
Let $f(x,u)= \dsum_{e\in E} x_e u_e$  denote the objective function value associated with a given solution $x \in \mathcal{ST}_G$, $u \in \mathcal{U}$. Let also $ x^1, \ldots, x^k\in \ST_G$ and $u^1, \ldots, u^k \in \mathcal{U}$ be the solutions obtained in the first $k$ steps of the alternate convex search for a given initial solution.

Observe that in the mathheuristic, for $ u^j$ given, $ x^{j+1}$ is obtained by solving $({\rm PX_{\bar{u}}})$ with weights $\bar u=u^j$. Hence, 
$$
 \dsum_{e\in E} x^{j+1}_e\; u_e^j \leq \dsum_{e\in E}  x_e \; \bar u_e^j, \forall x \in \ST_G.
$$

Next, solving $({\rm PU_{\bar x}})$ with $\bar x= x^{j+1}$, one obtains $u(x^{j+1})$ and then $u^{j+1}$ with:

$$
\dsum_{e\in E} x^{j+1}_e\; u(x^{j+1})_e=\dsum_{e\in E} x^{j+1}_e\; u^{j+1}_e \leq \dsum_{e\in E}  x^{j+1}_e\; u_e, \forall u \in \mathcal{U}.
$$

Hence, $f(x^{k}, u^{k}) \geq f(x^k, u(x^k))\geq f(x^{k+1}, u^{k+1})$, so the sequence $\{f(x^j, u^j)\}_{j\in \mathbb{Z}_+}$ is monotonically non-increasing. Thus, since $f(x,u)\geq 0$ for all $x\in \ST_G$ and $u\in \mathcal{U}$, the sequence of objective values converges.\\

Let $\Theta^* = \lim\limits_{j\rightarrow\infty} f(x^j,u^j)$ and $x^*\in \ST_G$, $u^*\in \mathcal{U}$ such that $f(x^*, u^*)=\Theta^*$.
Since $\ST_G$ and $\mathcal{U}$ are closed sets and $f$ is continuous, we have that taking limits:
$$
\Theta^* = \dsum_{e\in E} x^*_e u^*_e \leq \dsum_{e\in E} x_e u^*_e \quad and \quad \Theta^* = \dsum_{e\in E} x^*_e u^*_e \leq \dsum_{e\in E} x^*_e u_e.
$$
Thus, $(x^*, u^*)$ is a partial optimum MSTN.
\end{proof}

Since only partial optimality of the solutions is assured at the end of each inner loop, it is possible that the mathheuristic gets trapped at a local optimum. Hence we have incorporated a multistart outer loop to allow escaping from local optimal. Note that the mathheuristic becomes an exact solution method if all possible spanning trees are considered as initial solutions. However, complete enumeration is prohibitive, even if the number of potential MSTs is finite (despite using varying weights). On the other hand, we have observed that $(i)$ the mathheuristic is sensitive to the provided initial feasible solution, and; $(ii)$ in many cases, a few changes over an initial standard MST with respect to the distances between the centers of the neighborhoods are enough to find an optimal  MSTN solution.
Hence, we generate the set of initial spanning trees for the multistart procedure with an adaptation of the method proposed in \cite{generatingst}, which is described in Algorithm \ref{alg:multistart}.
In principle, this method generates the whole set of spanning trees on a given graph (by increasing order values relative to a given weight vector). In our adaptation, we stop generating new spanning trees, when one of the following criteria is met: (1) a given number of MSTs has already been generated; or, (2) no improvement has been obtained, in the MSTNs obtained in the inner iterations, for a given number of outer iterations.

\begin{algorithm}[H]
\SetKwInOut{Input}{Initialization}\SetKwInOut{Output}{output}

 \Input{$u^0_{vw} = \|v-w\|$, $\forall v, w \in V$ and $T^0$ the MST with respect to $u^0$, $\mathcal{T}=\{T^0\}$.}

\For{$T\in \mathcal{T}$}{

 Let $e_1, \ldots, e_{n-1}$ be the edges of $T$.

\For{$i=1, \ldots, n-1$}{
Construct the MST with respect to $u^0$, $T_i$, such that $e_i$ does not belong to the tree but $e_1, \ldots, e_{i-1}$ are part of it. Let $c_i$ be the weight of $T_i$.
}

Choose $T' \in \{T_1, \ldots, T_{n-1}\}$ with $c(T^{'}) = \dmin_{i=1, \ldots, n} c_i$ and add it to $\mathcal{T}$.

}

 \caption{Initial solutions for the multistart procedure.\label{alg:multistart}}
\end{algorithm}

\vspace*{0.5cm}

A series of computational experiments have been performed to analyze the computing times and the quality of the solutions obtained with the overall heuristic. We report results based on two batteries of  benchmark instances. The first one is the same that was used in our previous experiments. Here the goal is to compare the quality of the solutions obtained by the exact and the heuristic methods. The second one contains larger size instances and the goal is to explore the limit of the mathheuristic. In the experiments we do not fix limits on the number of inner iterations but we set up the maximum number of trees generated (outer iterations) to $100\times |E|$. Table \ref{table:5-6} summarizes the obtained numerical results. We report average values of the computing times consumed the the mathheuristic (CPU) and the percentage deviation (\%Dev) with respect to the optimal (or best-known) solutions obtained with the exact approaches. Observe that the quality of the solutions is extremely good, as the maximum \%Dev obtained in all the experiments was $1.3086\%$. Furthermore, in most of the cases where the exact approaches did not prove the optimality of the best solution found, the heuristic produced a better solution. Indeed, many of the proven optimal solutions obtained with the other approaches, where also obtained with the mathheuristic. Moreover, in a few cases the mathheuristic gives slightly better solutions than those obtained by the exact methods which showed some precision difficulties caused by numerical instability.  Tables \ref{table:7} and \ref{table:8} show the results for the largest instances. We report, apart from the average computing times, the percentage deviations with respect to available lower (\%Dev LB) and upper bounds (\%Dev UB) for the optimal value of the MSTN. Lower bounds were calculated by computing the MST with respect to the original graph in which the edge lengths are given as  the minimum distance between the neighborhoods that contain the vertices of each edge, i.e.:
$$
\bar u_{e} = \min \{d(y_v, y_w): y_v \in \mathcal{N}_v, y_w\in \mathcal{N}_w\}, \quad \text{ for } e=\{v,w\} \in E.
$$
Upper bounds are computed as the optimal value of \eqref{xmatheu0}, when $\bar x$ is the standard MST. Finally, we also report the percentage of instances (out of $5$) in which the solution of the matheuristic coincides with the upper bound (i.e. the underlined MSTN equals the MST). As expected, the deviations with respect to the lower and upper bounds increases as the radii of the neighborhoods do. The same happens with the number of instances in which the solutions of the MSTN coincide with those of MST. In scenario 4, the instances with largest radii, the lower bounds are close to zero in most of the cases since almost all pairs of neighborhoods intersect, and several 100\% deviations were obtained.  The reader may observe that deviation with respect to lower bounds are few significative since these bounds are always rather far from the actual optimal solution. We would also like to emphasize that  computing times for the three-dimensional instances are slightly larger than those obtained for the planar instances, due to the number of variables of the problems \eqref{xmatheu0}, that must be iteratively solved in the inner loop of the algorithm.  However, the times do not seem to largely depend of the size of the neighborhoods.

\begin{table}[htbp]

  \caption{Average Results for the mathheuristic.}
   {\small
   \begin{center}\begin{tabular}{|r|r|cc|c|cc|}\cline{3-4}\cline{6-7}
\multicolumn{2}{c}{} & \multicolumn{2}{|c|}{2-dimensional instances}& & \multicolumn{2}{|c|}{3-dimensional instances}\\ \cline{1-4}\cline{6-7}
 $r$ &   $n$ & CPU & \%Dev &                                                         & {CPU} & {\%Dev} \\\cline{1-5}\cline{6-7}
\multirow{12}{*}{1} &     {5} & 0.1004 & 0.0000\% &                                  & 0.1594 & 0.0000\% \\
&     {6} & 0.2068 & 0.0000\%   &                                                    & 0.2200 & 0.0000\% \\
 &    {7} & 0.3368 & 0.0433\%   &                                                    & 0.3614 & 0.0001\% \\
 &    {8} & 0.5220 & 0.0000\%   &                                                    & 0.7036 & 0.0000\% \\
 &    {9} & 0.6982 & 0.0000\%   &                                                    & 0.6792 & 0.0195\% \\
 &    {10} & 1.2014 & 0.1768\%  &                                                    & 1.2254 & 0.0000\% \\
 &    {11} & 1.8868 & 0.2679\%  &                                                    & 2.1230 & 0.3749\% \\
 &    {12} & 2.4382 & 0.0000\%  &                                                    & 2.3078 & 0.0000\% \\
 &    {13} & 3.0136 & 0.1319\%  &                                                    & 4.1954 & 0.1223\% \\
&     {14} & 3.9986 & 0.1802\%  &                                                    & 4.0428 & 0.0527\% \\
&     {15} & 5.9238 & 0.3095\%  &                                                    & 5.4956 & 0.2659\% \\
&     {20} & 15.3978 & 0.2068\% &                                                    & 15.4622 & 0.0565\% \\\cline{1-5}\cline{6-7}
\multirow{11}{*}{2} &    {5} & 0.1788 & 0.0000\% &                                   & 0.2416 & 0.0001\% \\
 &    {6} & 0.2603 & 0.0011\%  &                                                     & 0.3098 & 0.0000\% \\
 &    {7} & 0.3972 & 0.1528\%  &                                                    & 0.5358 & 0.0000\% \\
 &    {8} & 0.8566 & 0.0000\%  &                                                    & 1.3224 & 0.0000\% \\
 &    {9} & 0.9240 & 0.6322\%  &                                                    & 0.9988 & 0.3318\% \\
 &    {10} & 1.4706 & 0.1666\% &                                                    & 1.6722 & 0.0296\% \\
 &    {11} & 2.0872 & 0.8081\% &                                                    & 2.5434 & 0.3964\% \\
 &    {12} & 3.1428 & 0.0212\% &                                                    & 4.2852 & 0.2285\% \\
 &    {13} & 3.7266 & 0.5755\% &                                                    & 6.3750 & 0.3975\% \\
  &   {14} & 5.6144 & 0.5838\% &                                                    & 6.5618 & 0.0270\% \\
  &   {15} & 9.1994 & -0.0408\%&                                                    & 10.2092 & 0.3245\% \\\cline{1-5}\cline{6-7}
\multirow{11}{*}{3}  &   {5} & 0.1710 & 0.0000\% &                                  & 0.2370 & 0.0000\% \\
 &   {6} & 0.2134 & 0.0000\% &                                  & 0.6210 & 0.0000\% \\
  &   {7} & 0.5969 & 0.1360\%  &                                                    & 0.7737 & 0.0713\% \\
  &   {8} & 0.9008 & 0.1571\%  &                                                    & 1.3504 & 0.0271\% \\
  &   {9} & 1.3432 & 1.3086\%  &                                                    & 2.3226 & 0.7177\% \\
&     {10} & 1.8258 & 0.8340\% &                                                    & 2.6464 & 0.4596\% \\
&     {11} & 3.0670 & 0.1899\% &                                                    & 4.4142 & 1.1838\% \\
&     {12} & 4.3984 & 0.1122\% &                                                    & 5.2298 & 0.0581\% \\
 &    {13} & 4.9976 & 0.4673\% &                                                    & 7.1142 & 1.2851\% \\
 &    {14} & 6.7682 & -0.1210\%&                                                    & 10.2342 & -0.1614\% \\
 &    {15} & 8.2982 & -0.0949\%&                                                    & 11.2072 & 0.2390\% \\\cline{1-5}\cline{6-7}
\multirow{11}{*}{4} &    {5} & 0.1664 & 0.0000\% &                                  & 0.2738 & 0.0000\% \\
 &    {6} & 0.3942 & 0.1012\% &                                                     & 0.4942 & 0.5379\% \\
  &    {7} & 0.7893 & 0.0601\% &                                                     & 0.9942 & 0.1123\% \\
 &    {8} & 1.1640 & 0.0000\% &                                                     & 1.6256 & 0.0353\% \\
 &    {9} & 1.5462 & 0.7477\% &                                                     & 1.8514 & 0.4004\% \\
 &    {10} & 2.2468 & 1.1261\%&                                                     & 2.6576 & 1.3283\% \\
 &    {11} & 3.2060 & 0.7875\%&                                                     & 3.6996 & 0.6159\% \\
  &   {12} & 4.5152 & 0.2935\%&                                                     & 4.8816 & 0.1611\% \\
  &   {13} & 5.0992 & 0.7808\%&                                                     & 7.2430 & 1.0225\% \\
  &   {14} & 6.8126 & -0.1978\% &                                                   & 9.6768 & 0.6739\% \\
  &   {15} & 8.1124 & 0.0105\%&                                                     & 11.6100 & -0.2135\%\\\cline{1-5}\cline{6-7}
     \end{tabular}
     \end{center}
     }
   \label{table:5-6}
\end{table}

\begin{table}[htbp]
  \centering
  \caption{Average Results for the mathheuristic for large instances in the planar case.}
 {\small   \begin{tabular}{|r|r|rrrr|}\hline
  $r$ & $|V|$ & CPU & \%Dev LB & \% Dev UB & \% MST \\\hline
\multirow{12}{*}{1} &     {20} & 14.5532 & 23.0877\% & 0.1201\% & 40.00\% \\
&    {25} & 27.1624 & 27.6163\% & 0.2969\% & 40.00\% \\
&    {30} & 54.5254 & 27.8230\% & 0.3004\% & 40.00\% \\
&    {35} & 82.7320 & 28.8806\% & 0.1985\% & 40.00\% \\
&    {40} & 122.8916 & 28.7590\% & 0.3342\% & 40.00\% \\
&    {45} & 182.2026 & 38.7607\% & 0.1451\% & 80.00\% \\
&    {50} & 255.4392 & 43.0832\% & 0.0912\% & 80.00\% \\
&    {60} & 472.9626 & 40.6246\% & 0.2814\% & 20.00\% \\
&    {70} & 724.8468 & 43.4054\% & 0.1118\% & 80.00\% \\
&    {80} & 751.3728 & 47.7128\% & 0.3567\% & 40.00\% \\
&    {90} & 1064.7958 & 49.2007\% & 0.0000\% & 100.00\% \\
&    {100} & 1480.0034 & 53.4484\% & 0.1639\% & 80.00\% \\\hline
\multirow{12}{*}{2} &    {20} & 16.4950 & 62.3051\% & 1.3996\% & 0.00\% \\
 &   {25} & 31.6210 & 77.3769\% & 0.3444\% & 20.00\% \\
 &   {30} & 59.5594 & 77.6920\% & 1.5311\% & 0.00\% \\
 &   {35} & 87.8010 & 86.6972\% & 2.4308\% & 0.00\% \\
 &   {40} & 145.0846 & 87.3522\% & 1.2426\% & 40.00\% \\
 &   {45} & 192.4576 & 84.7788\% & 0.7022\% & 60.00\% \\
 &   {50} & 283.2516 & 91.5316\% & 1.0501\% & 40.00\% \\
 &   {60} & 525.9362 & 96.1926\% & 1.6971\% & 0.00\% \\
 &   {70} & 835.0496 & 96.2605\% & 0.8858\% & 20.00\% \\
 &   {80} & 779.3946 & 97.2727\% & 0.9087\% & 40.00\% \\
 &   {90} & 1122.9898 & 98.3883\% & 0.5728\% & 60.00\% \\
 &   {100} & 1548.9070 & 99.3069\% & 1.4232\% & 40.00\% \\\hline
\multirow{12}{*}{3} &    {20} & 16.0632 & 90.6985\% & 2.2212\% & 20.00\% \\
&    {25} & 32.1278 & 96.2322\% & 0.7643\% & 20.00\% \\
&    {30} & 65.7792 & 97.5944\% & 1.0350\% & 0.00\% \\
&    {35} & 90.1888 & 98.4009\% & 5.9840\% & 0.00\% \\
&    {40} & 137.5042 & 99.0318\% & 2.0271\% & 0.00\% \\
&    {45} & 198.4974 & 99.0682\% & 1.0427\% & 40.00\% \\
&    {50} & 268.2828 & 99.8648\% & 2.2477\% & 20.00\% \\
&    {60} & 502.3478 & 100.0000\% & 3.2364\% & 0.00\% \\
&    {70} & 816.0300 & 100.0000\% & 2.7085\% & 20.00\% \\
&    {80} & 756.5704 & 100.0000\% & 2.3165\% & 40.00\% \\
&    {90} & 1116.6500 & 100.0000\% & 1.8877\% & 40.00\% \\
&    {100} & 1530.6052 & 100.0000\% & 1.5370\% & 20.00\% \\\hline
\multirow{12}{*}{4} &    {20} & 16.4998 & 97.9307\% & 2.7959\% & 20.00\% \\
&    {25} & 33.8690 & 99.3203\% & 1.6366\% & 20.00\% \\
&    {30} & 61.1976 & 100.0000\% & 2.6932\% & 0.00\% \\
&    {35} & 89.4202 & 100.0000\% & 8.7080\% & 0.00\% \\
&    {40} & 146.1266 & 100.0000\% & 3.3380\% & 0.00\% \\
&    {45} & 213.8344 & 100.0000\% & 3.0796\% & 20.00\% \\
&    {50} & 282.9736 & 100.0000\% & 2.0663\% & 20.00\% \\
&    {60} & 486.8964 & 100.0000\% & 4.5859\% & 0.00\% \\
&    {70} & 763.0016 & 100.0000\% & 4.2135\% & 0.00\% \\
&    {80} & 748.1272 & 100.0000\% & 4.5767\% & 0.00\% \\
&    {90} & 1085.8690 & 100.0000\% & 3.2538\% & 20.00\% \\
&    {100} & 1668.2424 & 100.0000\% & 2.7675\% & 20.00\% \\\hline
        \end{tabular}}
       \label{table:7}
\end{table}

\begin{table}[htbp]
  \centering
  \caption{Average Results for the mathheuristic for large instances in the 3D case.}
{\small   \begin{tabular}{|r|r|rrrr|}\hline
  $r$ & $|V|$ & CPU & \%Dev LB & \% Dev UB & \% MST \\\hline
\multirow{12}{*}{1} &   20    & 14.6272 & 10.3986\% & 0.0467\% & 80.00\% \\
&    25    & 40.6772 & 13.0944\% & 0.0378\% & 60.00\% \\
&    30    & 69.8356 & 10.6289\% & 0.0006\% & 80.00\% \\
&    35    & 106.5134 & 11.2375\% & 0.1286\% & 60.00\% \\
&    40    & 175.6634 & 11.0897\% & 0.1831\% & 40.00\% \\
&    45    & 262.2358 & 15.1212\% & 0.0322\% & 80.00\% \\
&    50    & 370.5236 & 17.9594\% & 0.2323\% & 60.00\% \\
&    60    & 631.6412 & 14.9262\% & 0.0000\% & 100.00\% \\
&    70    & 1071.5590 & 18.0318\% & 0.1747\% & 60.00\% \\
&    80    & 1071.1360 & 17.2028\% & 0.1713\% & 60.00\% \\
&    90    & 1570.6312 & 17.1973\% & 0.0046\% & 80.00\% \\
&    100  & 2256.3462 & 20.5805\% & 0.1206\% & 60.00\% \\\hline
\multirow{12}{*}{2} &    20    & 24.0912 & 34.4738\% & 0.9106\% & 20.00\% \\
&    25    & 49.7172 & 47.0066\% & 0.4466\% & 20.00\% \\
&    30    & 81.0262 & 40.1495\% & 1.3887\% & 20.00\% \\
&    35    & 123.2108 & 45.9130\% & 0.4637\% & 60.00\% \\
&    40    & 211.2694 & 48.8337\% & 0.9941\% & 20.00\% \\
&    45    & 295.5366 & 52.4260\% & 0.2171\% & 60.00\% \\
&    50    & 401.4358 & 55.8653\% & 0.5822\% & 60.00\% \\
&    60    & 743.1540 & 61.8838\% & 0.2815\% & 60.00\% \\
&    70    & 1139.6448 & 68.2234\% & 0.7040\% & 40.00\% \\
&    80    & 1145.8188 & 69.4113\% & 0.4693\% & 40.00\% \\
&    90    & 1835.7320 & 71.7928\% & 0.5406\% & 40.00\% \\
&    100   & 2456.1402 & 77.1601\% & 0.1699\% & 60.00\% \\\hline
\multirow{12}{*}{3} &    20    & 24.9052 & 66.9737\% & 2.4841\% & 20.00\% \\
&    25    & 51.9204 & 76.8203\% & 2.8566\% & 0.00\% \\
&    30    & 83.2864 & 75.1517\% & 3.4033\% & 20.00\% \\
&    35    & 136.2574 & 83.2923\% & 0.8824\% & 40.00\% \\
&    40    & 207.1532 & 82.1425\% & 3.4419\% & 0.00\% \\
&    45    & 293.3924 & 85.7698\% & 1.1218\% & 20.00\% \\
&    50    & 431.9292 & 91.9528\% & 1.6269\% & 40.00\% \\
&    60    & 741.9330 & 96.3082\% & 2.9933\% & 20.00\% \\
&    70    & 1163.3446 & 97.8903\% & 2.2103\% & 0.00\% \\
&    80    & 1231.5932 & 97.5674\% & 0.9325\% & 40.00\% \\
&    90    & 1770.6206 & 98.3531\% & 1.5740\% & 20.00\% \\
&    100   & 2357.2434 & 98.5889\% & 2.7997\% & 20.00\% \\\hline
\multirow{12}{*}{4} &    20    & 24.4860 & 90.5059\% & 4.3812\% & 0.00\% \\
&    25    & 50.6444 & 93.6932\% & 2.9003\% & 0.00\% \\
&    30    & 84.3946 & 96.3750\% & 4.9004\% & 20.00\% \\
&    35    & 134.4824 & 97.3869\% & 2.5519\% & 20.00\% \\
&    40    & 213.2442 & 98.0207\% & 5.4728\% & 0.00\% \\
&    45    & 304.6368 & 99.5034\% & 1.9230\% & 0.00\% \\
&    50    & 415.3388 & 99.3344\% & 3.2609\% & 0.00\% \\
&    60    & 721.3308 & 99.9964\% & 2.7762\% & 20.00\% \\
&    70    & 1189.9664 & 100.0000\% & 3.0113\% & 0.00\% \\
&    80    & 1233.2842 & 100.0000\% & 2.1201\% & 20.00\% \\
&    90    & 1922.6220 & 100.0000\% & 2.3436\% & 0.00\% \\
&    100   & 2412.5672 & 100.0000\% & 2.9934\% & 20.00\%\\\hline
\end{tabular}}
       \label{table:8}
\end{table}

\section{Conclusions}

We analyzed the problem of finding minimum spanning trees with neighborhoods, where the neighborhoods are defined as SOC-representable objects and the lengths of the arcs in the graph are induced by a $\ell_q$ norm. Two MINLP formulations are provided whose differences come from the representation of the subtour elimination constraints. We propose a decomposition-based methodology to solve the problem based on the efficiency of solving SOCP problems. Furthermore, a new mathheuristic procedure is applied to solve the problem exploiting not only the SOC-representability of the neighborhoods but also that the MST problems are easily solvable. The results of an extensive computational experience are reported to compare all formulations and procedures provided throughout this paper.

\section*{Acknowledgements}

The first and third authors were partially supported by the project MTM2016-74983-C2-1-R (MINECO, Spain). The second author was partially supported by the project MTM2015-63779-R (MINECO, Spain).

\section*{References}

\end{document}